\documentclass[a4paper, 10pt]{amsart}
\textwidth16.1cm \textheight21cm \oddsidemargin-0.1cm
\evensidemargin-0.1cm
\usepackage{amsmath}
\usepackage{amssymb, color, hyperref}

\theoremstyle{plain}
\newtheorem{theorem}{\bf Theorem}[section]
\newtheorem{proposition}[theorem]{\bf Proposition}
\newtheorem{lemma}[theorem]{\bf Lemma}
\newtheorem{corollary}[theorem]{\bf Corollary}

\theoremstyle{definition}
\newtheorem{example}[theorem]{\bf Example}

\newtheorem{definition}[theorem]{\bf Definition}
\newtheorem{remark}[theorem]{\bf Remark}

\newcommand{\N}{\mathbb N}
\newcommand{\Z}{\mathbb Z}

\newcommand{\Q}{\mathbb Q}

\DeclareMathOperator{\spec}{spec}

\numberwithin{equation}{section}

\begin{document}

\title{A characterization of weakly Krull monoid algebras}

\author{Victor Fadinger and Daniel Windisch}

\address{University of Graz, NAWI Graz \\
Institute for Mathematics and Scientific Computing \\
Heinrichstra{\ss}e 36\\
8010 Graz, Austria}
\email{victor.fadinger@uni-graz.at}
\address{Graz University of Technology, NAWI Graz\\
Institute for Analysis and Number Theory\\
Kopernikusgasse 24/II\\
8010 Graz, Austria}
\email{dwindisch@math.tugraz.at}
\urladdr{}

\thanks{This work was supported by the Austrian Science Fund FWF, Projects W1230 and P~30934.}

\keywords{semigroup ring, monoid ring, monoid algebra, weakly Krull domain, weakly Krull monoid, affine monoid, affine monoid ring, affine monoid algebra}

\begin{abstract}
Let $D$ be a domain and let $S$ be a torsion-free monoid whose quotient group satisfies the ascending chain condition on cyclic subgroups. We give a characterization of when the monoid algebra $D[S]$ is weakly Krull. As corollaries, we obtain the results on when $D[S]$ is Krull resp. generalized Krull, due to Chouinard resp. El Baghdadi and Kim. Furthermore, we deduce Chang's theorem on weakly factorial monoid algebras and we characterize the weakly Krull domains among the affine monoid algebras. 
\end{abstract}

\maketitle

\section{Introduction}

When considering monoid algebras $D[S]$, an immediate question is whether certain properties of the domain $D$ and the monoid $S$ carry over to the monoid algebra and conversely. A lot of such properties are studied in the textbook by Gilmer \cite{Gil84}, among them the property of being a Krull domain. For monoid algebras, this property is completely characterized in terms of the domain and the monoid, and this characterization was originally proved by Chouinard \cite{Chou81}. There he was the first to give the definition of a Krull monoid, which in turn was preparing the ground for a whole new research area based on the fact that a domain is Krull if and only if its multiplicative monoid is a Krull monoid. For a detailed study of Krull monoids, see \cite{Ge-HK06a}.\\
Many well-studied classes of domains (e.g. non-principal orders in number fields) fail to be Krull, but still share many important properties with Krull domains. To investigate them, two generalizations of Krull domains and monoids were introduced; namely generalized Krull and weakly Krull domains and monoids. The latter were first studied in \cite{AMZ92}, although not named weakly Krull domains there. It still holds true, that a domain is generalized (resp. weakly) Krull if and only if its multiplicative monoid is generalized (resp. weakly) Krull. A profound introduction to these classes of monoids can be found in \cite{HK98}. A divisor-theoretic characterization of weakly Krull monoids was first given by Halter-Koch in \cite{HK95}, where one can also find a proof of the statement that a domain is weakly Krull if and only if its multiplicative monoid is.\\
In 2009, Chang \cite{Chang09} gave a characterization of when the monoid algebra is weakly factorial (i.e. every non-zero non-unit is a product of primary elements) under the assumption that the quotient group of the monoid satisfies the ACC (ascending chain condition) on cylic subgroups. Namely, $D[S]$ is weakly factorial if and only if $D$ is a weakly factorial GCD-domain and $S$ is a weakly factorial GCD-monoid. In this paper, he also posed the question of when a monoid algebra is weakly Krull (note that a domain is weakly factorial if and only if it is  weakly Krull with trivial $t$-class group). In 2016, El Baghdadi and Kim \cite{BK2016} gave a complete characterization of when a monoid algebra is generalized Krull (namely if and only if both, the domain and the monoid are generalized Krull and the quotient group of the monoid satisfies the ACC on cyclic subgroups). An immediate consequence of the results by Chouinard, and El Baghdadi and Kim is the fact that the polynomial ring over a (generalized) Krull domain is (generalized) Krull and one would hope so as well for the case of weakly Krull domains. That this is not the case was proven by D.D. Anderson, Houston and Zafrullah in \cite[Proposition 4.11]{AHZ93}. In fact, they showed that this is the case precisely for weakly Krull UMT-domains. Recall that a domain is said to be a UMT-domain if all prime ideals of the polynomial ring $D[X]$ lying over $(0)$ are maximal $t$-ideals.\\
Answering Chang's question and trying to close the last gap for a complete picture of the generalizations of Krull domains, in Section 3, we give a characterization of when a monoid algebra is weakly Krull under the assumption that the quotient group of the monoid satisfies the ACC on cyclic subgroups. As a proof of the applicability of our description, in Section 4, we characterize the weakly Krull domains among the affine monoid algebras, which play an important role in the study of polytopes (see \cite{BG09}). Moreover, we reobtain the results by Chouinard, El Baghdadi and Kim, and Chang.


\section{Preliminaries}

We assume some familiarity with monoid algebras and ideal systems. We consider monoids to be cancellative, commutative and unitary semigroups and write them additively. For a domain $D$ and a monoid $S$, let $D[S]$ denote the monoid algebra of $D$ over $S$. It is well known that the monoid algebra $D[S]$ is a domain if and only if the monoid $S$ is torsion-free \cite[Theorem 8.1]{Gil84} and in that case $S$ amits a total order $<$ compatible with its semigroup operation. Therefore we can write every element $f\in D[S]$ in the form $f=\sum_{i=1}^{n}d_iX^{s_i}$ with $d_i\in D$, $s_i\in S$ and $s_1<s_2<\hdots <s_n$. For subsets $P\subseteq D$ and $H\subseteq S$, we denote by $P[H]=\{\sum_{i=1}^{n}p_iX^{h_i}\mid n\in\N, p_i\in P, h_i\in H\}\subseteq D[S]$.\\
A group satisfies the ACC (ascending chain condition) on cyclic subgroups if and only if it is of type $(0,0,\hdots )$. For a proof of this statement and for further equivalent conditions, see \cite[\S 14]{Gil84}.\\
Let $S$ be a monoid. We denote by
\begin{itemize}
\item $\mathsf q(S)$ the \textit{quotient group} of $S$,
\item $\widetilde{S}=\{x\in\mathsf q(S)\mid nx\in S\text{ for some } n\in\N\}$ the \textit{root closure} of $S$,
\item $\widehat{S}=\{x\in\mathsf q(S)\mid \text{there is } d\in S \text{ s.t. } d+nx\in S\text{ for all } n\in\N \}$ the \textit{complete integral closure} of $S$.
\end{itemize}
Let $\mathsf q(S)=G$ and let $X\subseteq S$ be a subset. We set $X^{-1}=(S:_G X)=\{g\in G\mid g+X\subseteq S\}$, $X_v=(X^{-1})^{-1}$ and $X_t=\bigcup_{J\subseteq X, |J|<\infty}J_v$.  We say that an ideal $I$ of $S$ is a $v$-\textit{ideal} (resp. \textit{t-ideal}) if $I=I_v$ (resp. $I=I_t$).
$I$ is called a \textit{maximal} $t$-ideal, if $I$ is maximal among all proper $t$-ideals of $S$ (and is therefore necessarily prime). $S$ is said to be of $t$-\textit{dimension} 1 ($t$-$\dim(S)=1$) if each maximal $t$-ideal of $S$ is a minimal non-empty prime ideal. An analogous concept exists for domains and the reader is referred to \cite{Gil92} for the domain case and to \cite{HK98} for the monoid case. We denote by $\mathfrak X(D)$ (resp. $\mathfrak X(S)$) the height-one spectrum of $D$ (resp. the minimal non-empty prime ideals of $S$) and will often also call elements of $\mathfrak X(S)$ height-one prime ideals. A domain (resp. monoid) $D$ is called \textit{weakly Krull} if $D=\bigcap_{P\in\mathfrak X(D)}D_P$ and $\mathfrak X(D)$ is of finite character, meaning that every non-zero non-unit (resp. non-unit) is contained only in a finite number of height-one prime ideals (resp. minimal non-empty prime ideals). A weakly Krull domain (resp. monoid) $D$ is called \textit{generalized Krull} if $D_P$ is a valuation domain (resp. monoid) for all $P\in\mathfrak X(D)$. For examples of generalized Krull monoids that do not stem from domains, we refer to \cite{BGGS} and \cite{CHKK02}.


\section{Main Result}

We start with an investigation of the height-one spectrum of monoid algebras, which will lead us to the definition of the central property characterizing weakly Krull monoid algebras.

\begin{lemma}\label{2.1}
Let $D$ be a domain with quotient field $K$ and let $S$ be a torsion-free monoid with quotient group $G$. For the height-one spectrum of $D[S]$ the following hold:
\begin{equation*}
\mathfrak X(D[S])\subseteq \{P[S]\mid P\in\mathfrak X(D)\}\cup \{\overline{Q}\cap D[S]\mid \overline{Q}\in\mathfrak X(K[S])\}=:\mathfrak P
\end{equation*}
and also
\begin{equation*}
\mathfrak X(D[S])\subseteq \{D[P]\mid P\in\mathfrak X(S)\}\cup \{\overline{Q}\cap D[S]\mid \overline{Q}\in\mathfrak X(D[G])\}=:\mathfrak P'.
\end{equation*}
\end{lemma}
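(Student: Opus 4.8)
The plan is to prove both inclusions by one and the same argument: given a height-one prime $\mathfrak p$ of $D[S]$ (in particular $\mathfrak p\neq(0)$), split into two cases according to whether $\mathfrak p$ "comes from the coefficients in $D$" or "from the exponents in $S$". As preparation I would record two localization identities and two standard structural facts. First, $K[S]=(D\setminus\{0\})^{-1}D[S]$, and $D[G]=M^{-1}D[S]$ with $M=\{X^s\mid s\in S\}$; the second holds because each $g\in G$ can be written $g=s-t$ with $s,t\in S$, so for any $f\in D[G]$ a suitable monomial $X^t$ with $t\in S$ clears denominators, i.e.\ $X^tf\in D[S]$. Second, for a prime ideal $P$ of $D$ the extension $P[S]=PD[S]$ is a prime ideal of $D[S]$ with $P[S]\cap D=P$ (because $D[S]/P[S]\cong(D/P)[S]$, a domain since $S$ is torsion-free), and for a prime ideal $P$ of $S$ the ideal $D[P]$ of $D[S]$ is prime with $\{s\in S\mid X^s\in D[P]\}=P$; the primality of $D[P]$ follows by fixing a total order on $S$ compatible with its operation and noting that the lowest-order term of a product of two nonzero elements supported on $S\setminus P$ is again supported on $S\setminus P$ (a submonoid) with nonzero coefficient. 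Both facts are classical and may alternatively be cited from \cite{Gil84}.

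For the first inclusion let $\mathfrak p\in\mathfrak X(D[S])$ and put $P:=\mathfrak p\cap D$. If $P\neq(0)$, then $P[S]$ is a nonzero prime ideal of $D[S]$ with $P[S]\subseteq\mathfrak p$, so $\mathfrak p=P[S]$ because $\mathfrak p$ has height one; moreover $P\in\mathfrak X(D)$, since a chain $(0)\subsetneq P'\subsetneq P$ of primes of $D$ would yield $(0)\subsetneq P'[S]\subsetneq P[S]=\mathfrak p$ — the inclusions being strict because contracting back to $D$ returns $P'\subsetneq P$ — contradicting $\operatorname{ht}\mathfrak p=1$. If $P=(0)$, then $\mathfrak p\cap(D\setminus\{0\})=\emptyset$, so $\overline Q:=\mathfrak p K[S]$ is a prime of $K[S]$ with $\overline Q\cap D[S]=\mathfrak p$; as extension and contraction give an inclusion-preserving bijection between $\spec K[S]$ and the primes of $D[S]$ disjoint from $D\setminus\{0\}$, the prime $\overline Q$ is again of height one (it is nonzero, and any chain below it pulls back to one below $\mathfrak p$). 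Either way $\mathfrak p\in\mathfrak P$.

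The second inclusion is entirely parallel with the roles of $D$ and $S$ interchanged: for $\mathfrak p\in\mathfrak X(D[S])$ put $P:=\{s\in S\mid X^s\in\mathfrak p\}$, which is a prime ideal of $S$. If $P\neq\emptyset$, then $D[P]\subseteq\mathfrak p$ is a nonzero prime ideal, hence $\mathfrak p=D[P]$, and a nonempty prime $P'\subsetneq P$ of $S$ would again produce a length-two chain below $\mathfrak p$, so $P\in\mathfrak X(S)$; if $P=\emptyset$, then $\mathfrak p$ is disjoint from $M$, so $\overline Q:=\mathfrak p D[G]$ is a height-one prime of $D[G]$ with $\overline Q\cap D[S]=\mathfrak p$, giving $\mathfrak p\in\mathfrak P'$. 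I do not expect a genuine obstacle: the two structural facts about $P[S]$ and $D[P]$ being prime are standard, and the only point needing care is the passage from $\operatorname{ht}\mathfrak p=1$ to $\operatorname{ht}P=1$ (resp.\ $\operatorname{ht}\overline Q=1$), which rests on the strictness $P'\subsetneq P\Rightarrow P'[S]\subsetneq P[S]$ obtained by contraction and on the order-preserving correspondence of primes under localization.
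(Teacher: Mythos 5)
Your proposal is correct and follows essentially the same route as the paper: the same case split on $\mathfrak p\cap D$ resp.\ $\mathfrak p\cap S$, the same localization correspondences with $K[S]$ and $D[G]$, and the same contraction/chain argument reducing $\operatorname{ht}\mathfrak p=1$ to height one of $P$ resp.\ $\overline Q$. The only cosmetic difference is that you verify primality of $P[S]$ via $D[S]/P[S]\cong (D/P)[S]$ instead of citing Gilmer, and your lowest-term argument for primality of $D[P]$ is the same idea as the paper's Claim.
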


\begin{proof}
To begin with, note that $P\in\spec(D)$ if and only if $P[S]\in\spec(D[S])$ \cite[Corollary 8.2]{Gil84}. Let $Q\in\mathfrak X(D[S])$, then $Q\cap D\in\spec(D)$. If $Q\cap D=P\neq (0)$, then $(0)\subsetneq P[S]\subseteq Q$, so by ht$(Q)=1$  equality holds. To see that $P\in\mathfrak X(D)$, just note that $(0)\subsetneq P'\subsetneq P$ implies $(0)\subsetneq P'[S]\subsetneq P[S]=Q$ for every $P'\in\spec(D)$. In case $Q\cap D=(0)$, we use the correspondence of prime ideals between $D[S]$ and $K[S]\cong (D\setminus\{0\})^{-1}D[S]$.\\
For the second inclusion, the proof is quite similar, but first we have to show the following\\
\textbf{Claim:} $P\in\spec(S)$ if and only if $D[P]\in\spec(D[S])$.
\begin{proof}[Proof of Claim]
If $D[P]\in\spec(D[S])$ then $P=D[P]\cap S$ is a prime ideal of $S$, so let $P\in\spec(S)$. Clearly, $D[P]$ is an ideal of $D[S]$. Let $f,g\in D[S]$ such that $fg\in D[P]$ and note that we can totally order $S$ in a way that is compatible with the monoid operation of $S$ \cite[Corollary 3.4]{Gil84}. Let $<$ denote such an order and write $f=\sum\limits_{i=1}^{n}a_iX^{s_i}$ and $g=\sum\limits_{j=1}^{m}b_jX^{t_j}$ with $s_1<\hdots <s_n$ and $t_1<\hdots <t_m$. If $f$ or $g$ are in $D[P]$ we are done, so assume that $f,g\notin D[P]$ and let $u\in[1,n]$ and $v\in[1,m]$ be minimal such that $s_u, t_v\notin P$. Then $s_u+t_v\notin P$ and the coefficient of $X^{s_u+t_v}$ in $fg$ is $a_ub_v\neq 0$; to wit: If not, then there exist $s_{u'}$ and $t_{v'}$ different from $s_u$ and $t_v$ such that $s_u+t_v=s_{u'}+t_{v'}\notin P$, hence both $s_{u'},t_{v'}\notin P$. Since $s_u$ and $t_v$ were chosen minimal with respect to not lying in $P$ we obtain $s_u<s_{u'}$ and $t_v<t_{v'}$ contradicting the equality $s_u+t_v=s_{u'}+t_{v'}$. Therefore $fg\notin D[P]$.
\qedhere[Proof of Claim]
\end{proof}

Now let $Q\in \mathfrak X(D[S])$, then clearly $Q\cap S\in\spec(S)$ and if $Q\cap S=P\neq \emptyset$ then $D[P]\subseteq Q$ is a non-zero prime ideal, thus by $Q\in\mathfrak X(D[S])$ equality holds. Assume to the contrary that ht$(P)\geq 2$. Then there is $P'\in\spec(S)$ such that $\emptyset\subsetneq P'\subsetneq P$, giving $(0)\subsetneq D[P']\subsetneq D[P]$ contradicting the fact that $D[P]\in\mathfrak X(D[S])$. For the case $Q\cap S=\emptyset$, just note that we have $D[G]=D[S]_N$, where $N=\{X^{\alpha}\mid \alpha\in S\}$ and we can again use the prime ideal correspondence.
\end{proof}

In fact, the above proof shows us, that the primes coming from $D[G]$ resp. $K[S]$ are always height-one again. Thus, the only primes that can produce a strict inclusion are height-one primes from $D$ resp. $S$ that do not induce height-one primes in the monoid algebra.

\begin{definition}\label{2.2}
Let $D$ be a domain and let $S$ be a torsion-free monoid. We say that
\begin{enumerate}
\item $D$ is $S$-UMT if $\mathfrak X(D[S])=\mathfrak P$ and
\item $S$ is $D$-UMT if $\mathfrak X(D[S])=\mathfrak P'$.
\end{enumerate}
\end{definition}

The next remark is just a consequence of the previous definition and remark.

\begin{remark}\label{2.3}
Let $D$ be a domain and let $S$ be a torsion-free monoid.
\begin{enumerate}
\item The following are equivalent:
	\begin{enumerate}
	\item $D$ is $S$-UMT.
	\item For all $P\in\mathfrak X(D)$ we have $P[S]\in\mathfrak X(D[S])$.
	\end{enumerate}
\item The following are equivalent:
	\begin{enumerate}
	\item $S$ is $D$-UMT.
	\item For all $P\in\mathfrak X(S)$ we have $D[P]\in\mathfrak X(D[S])$.
	\end{enumerate}
\end{enumerate}
\end{remark}

\begin{lemma}\label{2.4}
Let $D$ be a domain with quotient field $K$ and let $S$ be a torsion-free monoid with quotient group $G$. Then the following hold:
\begin{enumerate}
\item $D$ is $S$-UMT if and only if $D$ is $G$-UMT,
\item $S$ is $D$-UMT if and only if $S$ is $K$-UMT.
\end{enumerate}

\end{lemma}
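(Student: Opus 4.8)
The plan is to reduce each of the two equivalences to the local criterion of Remark~\ref{2.3} and then to observe that the passage from $S$ to $G$ (respectively from $D$ to $K$) is, on the level of algebras, a localization at a multiplicative set disjoint from the prime ideal under scrutiny and from everything below it, so that heights are unaffected.

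For part (1) I would proceed as follows. Since $\mathfrak X(D)$ is intrinsic to $D$ and $G$ is torsion-free (so that $D[G]$ is a domain and Lemma~\ref{2.1} and Remark~\ref{2.3} apply to it), Remark~\ref{2.3}(1) reduces the claim to showing that, for a fixed $P\in\mathfrak X(D)$, one has $P[S]\in\mathfrak X(D[S])$ if and only if $P[G]\in\mathfrak X(D[G])$. By \cite[Corollary 8.2]{Gil84} both $P[S]$ and $P[G]$ are non-zero prime ideals. I would then invoke the identity $D[G]=D[S]_N$ with $N=\{X^{\alpha}\mid\alpha\in S\}$ already used in the proof of Lemma~\ref{2.1}, noting that $P[S]D[G]=P[G]$. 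The point that does the work is that $P[S]\cap N=\emptyset$: since $1\notin P$ and $\{X^s\mid s\in S\}$ is a $D$-basis of $D[S]$, no monomial $X^{\alpha}$ can lie in $P[S]$. Consequently every prime ideal of $D[S]$ contained in $P[S]$ is disjoint from $N$, and localizing at $N$ yields $D[S]_{P[S]}=D[G]_{P[G]}$; hence $P[S]$ has the same height in $D[S]$ as $P[G]$ has in $D[G]$, and since both are non-zero, one of them is height-one precisely when the other is.

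Part (2) I would treat in exactly the same fashion, now via Remark~\ref{2.3}(2): it suffices to show that, for a fixed $P\in\mathfrak X(S)$, one has $D[P]\in\mathfrak X(D[S])$ if and only if $K[P]\in\mathfrak X(K[S])$. By the Claim in the proof of Lemma~\ref{2.1} (applied with $D$, respectively with $K$), both $D[P]$ and $K[P]$ are non-zero primes, $K[S]=(D\setminus\{0\})^{-1}D[S]$, and $D[P]K[S]=K[P]$. Because $P$ is a proper ideal of $S$ we have $0\notin P$, hence $D[P]\cap D=(0)$; that is, $D[P]$---and therefore every prime ideal contained in it---is disjoint from the multiplicative set $D\setminus\{0\}$. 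Localizing gives $D[S]_{D[P]}=K[S]_{K[P]}$, so $D[P]$ has the same height in $D[S]$ as $K[P]$ has in $K[S]$, and the conclusion follows.

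I do not expect a real obstacle here: the whole content of the lemma is that ``being a height-one prime'' is a local property, and the two ring extensions appearing in the statement are precisely localizations at multiplicative sets meeting neither the prime ideal in question nor any prime below it. The only step that must be checked with care is exactly that disjointness---that $P[S]$ contains no $X^{\alpha}$, and that $D[P]$ meets $D$ only in $0$---which is where properness of $P$ is used; everything after that is the standard invariance of height under such localizations.
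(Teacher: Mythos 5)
Your argument is correct and is essentially the same as the paper's: both reduce via Remark~\ref{2.3} to a single $P$ and then use the localizations $D[G]=D[S]_N$ and $K[S]=(D\setminus\{0\})^{-1}D[S]$ together with the prime-ideal correspondence (height invariance) for primes disjoint from the multiplicative set. You merely spell out the disjointness checks ($P[S]\cap N=\emptyset$, $D[P]\cap D=(0)$) that the paper leaves implicit in the phrases ``not containing monomials'' and ``not containing non-zero constants.''
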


\begin{proof}
1. $D[G]\cong D[S]_N$ where $N=\{X^{\alpha}\mid \alpha\in S\}$, so we have a correspondence of the height-one prime ideals of $D[S]$ not containing monomials and height-one prime ideals of $D[G]$. Thus for $P\in\mathfrak X(D)$ we have $P[S]\in\mathfrak X(D[S])$ if and only if $P[G]=P[S]_N\in\mathfrak X(D[G])$.\\
2. $K[S]\cong (D\setminus \{0\})^{-1}D[S]$, so we have a correspondence of the height-one prime ideals of $D[S]$ not containing non-zero constants and height-one prime ideals of $K[S]$. Thus for $P\in\mathfrak X(S)$ we have $D[P]\in\mathfrak X(D[S])$ if and only if $K[P]\in\mathfrak X(K[S])$.
\end{proof}

We now characterize the weakly Krull group algebras.

\begin{proposition}\label{2.5}
Let $D$ be a domain and let $G$ be a torsion-free abelian group that satisfies the ACC on cyclic subgroups. Then $D[G]$ is weakly Krull if and only if $D$ is weakly Krull and $G$-UMT.
\end{proposition}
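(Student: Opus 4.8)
The plan is to reduce to the structure theorem for abelian groups of type $(0,0,\dots)$ and then to peel off the rank-one pieces one at a time. Since $G$ satisfies the ACC on cyclic subgroups, a standard result (cf.\ \cite[\S 14]{Gil84}) says that every finitely generated subgroup of $G$ is free, and more: $G$ can be written as an increasing union $G=\bigcup_{i} G_i$ of free abelian groups of finite rank, with each $G_i$ a direct summand of $G_{i+1}$. The key reduction is therefore the following special case: for a free abelian group $G_0$ of finite rank $n$, the ring $D[G_0]\cong D[X_1^{\pm 1},\dots,X_n^{\pm 1}]$ is a localization of the polynomial ring $D[X_1,\dots,X_n]$, and the latter is weakly Krull iff $D$ is a weakly Krull UMT-domain by \cite[Proposition 4.11]{AHZ93}. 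I would show that inverting the variables exactly kills the height-one primes $P[X_1,\dots,X_n]$ with $P\in\mathfrak X(D)$ that fail to stay height-one, and that the "$S$-UMT" condition of Definition~\ref{2.2}, applied iteratively, is precisely the UMT-condition; Lemma~\ref{2.4}(1) already tells us $D$ is $G_0$-UMT iff $D$ is $\Z^n$-UMT iff $D$ is $\Z$-UMT, i.e.\ $D$ is a UMT-domain.

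For the general group $G$, the forward direction goes as follows. Assume $D[G]$ is weakly Krull. Because $D$ is a retract of $D[G]$ (via the augmentation $X^g\mapsto 1$) and localizations/quotients behave well, one shows $D$ inherits weak Krull-ness; alternatively, $D\cong D[G]_N/\mathfrak{m}$-type arguments, or just the fact that $\mathsf q(D[G]) = \mathsf q(D)(G)$ and $D[G]_{P[G]}$ is a localization of $D_P[G]$, give the defining intersection and finite-character conditions for $D$. For the $G$-UMT condition we use Remark~\ref{2.3}(1): we must see $P[G]\in\mathfrak X(D[G])$ for every $P\in\mathfrak X(D)$; if not, then $\operatorname{ht}(P[G])\ge 2$, but a weakly Krull domain has $t$-dimension one, so $P[G]$ would not be a maximal $t$-ideal, and one derives a contradiction by localizing at $P$ and passing to the finite-rank case, where \cite[Proposition 4.11]{AHZ93} forces the UMT property. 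Conversely, assume $D$ is weakly Krull and $G$-UMT. By Lemma~\ref{2.1}, $\mathfrak X(D[G])\subseteq\{P[G]\mid P\in\mathfrak X(D)\}\cup\{\overline Q\cap D[G]\mid \overline Q\in\mathfrak X(K[G])\}$, and $G$-UMT upgrades this (via Remark~\ref{2.3}) to the statement that every $P[G]$ is genuinely height one. Then one checks $D[G]=\bigcap_{P\in\mathfrak X(D[G])}D[G]_P$: the primes $P[G]$ contribute $\bigcap_P D[G]_{P[G]} = \bigl(\bigcap_P D_P\bigr)[G] = D[G]$ after intersecting with the primes over $K[G]$, using that $K[G]$ is weakly Krull (indeed a Krull domain, since $G$ is of type $(0,0,\dots)$, by Chouinard's theorem). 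Finite character of $\mathfrak X(D[G])$ follows because a nonzero nonunit $f\in D[G]$ has finitely many coefficients lying in finitely many height-one primes of $D$, and its image in $K[G]$ lies in finitely many height-one primes there.

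The main obstacle I anticipate is the passage from the finite-rank case to arbitrary $G$ of type $(0,0,\dots)$: one must ensure that the weakly Krull property, the $t$-dimension-one property, and finite character all survive the direct-limit construction $D[G]=\varinjlim D[G_i]$. The subtle point is that a rising union of weakly Krull domains need not be weakly Krull in general, so the argument must exploit the specific compatibility coming from $G_i$ being a direct summand of $G_{i+1}$ (so that $D[G_{i+1}]$ is a polynomial-type extension, hence faithfully flat with well-controlled spectrum over $D[G_i]$), together with Chouinard's characterization of when $D[G]$ is Krull to handle the $K[G]$-part uniformly. Concretely, I would prove that a height-one prime of $D[G]$ contracts to a height-one prime of some $D[G_i]$ and is determined by it, reducing both the intersection formula and finite character to the finite-rank statements already in hand. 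Once that descent/ascent bookkeeping is set up, the equivalence falls out by combining Lemma~\ref{2.1}, Lemma~\ref{2.4}, Remark~\ref{2.3}, and \cite[Proposition 4.11]{AHZ93}.
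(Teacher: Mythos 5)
There is a genuine gap, and it sits at the foundation of your reduction. You assume that a torsion-free abelian group $G$ satisfying the ACC on cyclic subgroups can be written as an increasing union $G=\bigcup_i G_i$ of finite-rank free subgroups with each $G_i$ a direct summand of $G_{i+1}$. This is false: for a countable group such a filtration would give $G\cong G_1\oplus H_1\oplus H_2\oplus\cdots$ (writing $G_{i+1}=G_i\oplus H_i$), i.e.\ $G$ would be free, whereas there are non-free groups of type $(0,0,\hdots)$ --- for instance indecomposable torsion-free groups of finite rank $\geq 2$ all of whose elements have finite, almost-all-zero heights, or the Baer--Specker group $\Z^{\N}$ (not free by Baer's theorem, yet of type $(0,0,\hdots)$, since a prime dividing a nonzero element must divide its first nonzero coordinate). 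The whole point of the hypothesis ``ACC on cyclic subgroups'' rather than ``free'' is to cover such groups, so the reduction to $D[X_1^{\pm1},\hdots,X_n^{\pm1}]$ does not reach the general case. Moreover, even in the free finite-rank case your argument leans on facts the paper establishes only later and partly \emph{by means of} this proposition: \cite[Proposition 4.11]{AHZ93} is a one-variable statement, and the identifications ``$D$ is UMT iff $D$ is $\Z^n$-UMT iff $D$ is $\N_0$-UMT'' are Proposition \ref{2.10}, whose proof invokes Proposition \ref{2.5}; as written your route is circular.

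Independently of this, the two steps you yourself flag are exactly where proofs are missing: the limit step (you concede that a rising union of weakly Krull domains need not be weakly Krull, and the claim that a height-one prime of $D[G]$ contracts to, and is determined by, a height-one prime of some $D[G_i]$ is never established), and, in the converse direction, the asserted identity that intersecting the $D[G]_{P[G]}$ with the localizations at primes over $K[G]$ gives back $D[G]$ --- that identity \emph{is} the heart of the matter, not a bookkeeping remark; likewise the forward-direction claims that $D$ inherits weak Krullness as a ``retract'' and that non-height-one $P[G]$ leads to a contradiction ``by localizing and passing to finite rank'' are not arguments. The paper needs none of the structure theory of $G$: for ``$\Rightarrow$'' it applies the primary-decomposition characterization of weakly Krull domains \cite[Theorem 3.1]{AMZ92} to a principal ideal $dD[G]$ with $d\in P$ to exhibit a height-one prime $P_i[G]\subseteq P[G]$, forcing $P[G]\in\mathfrak X(D[G])$, and gets $D$ weakly Krull from $D=K\cap D[G]$; for ``$\Leftarrow$'' it uses factoriality of $K[G]$ \cite[Theorem 7.13]{GP74} to reduce a fraction $\frac fg$ to one with denominator $r\in D$, and then a primary decomposition $rD=\bigcap_i I_i$ together with \cite[Corollary 8.7]{Gil84} (each $I_i[G]$ is $P_i[G]$-primary) to conclude $f\in rD[G]$, finite character being immediate from factoriality of $K[G]$ and weak Krullness of $D$. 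If you wish to salvage your plan you would have to work with the genuinely directed family of all finitely generated subgroups of $G$ and prove the ascent/descent of height-one primes over it, at which point you would in effect be reconstructing the paper's direct argument.
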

\begin{proof}
"$\Rightarrow$" To prove that $D$ is $G$-UMT, it suffices by Lemma \ref{2.1} to show that $\mathfrak X(D[G])\supseteq \mathfrak P$, so let $P\in\mathfrak X(D)$ and $d\in P$. Since $D[G]$ is weakly Krull, by \cite[Theorem 3.1]{AMZ92} there exists a primary decomposition of $dD[G]=\bigcap_{i=1}^n A_i$, where $\sqrt{A_i}\in\mathfrak X(D[G])$ and since each $A_i$ contains $d$, we even have $\sqrt{A_i}=P_i[G]$ with $P_i\in\mathfrak X(D)$. Now $P[G]\supseteq \sqrt{dD[G]}=\sqrt{\bigcap_{i=1}^n A_i}=\bigcap_{i=1}^n\sqrt{A_i}=\bigcap_{i=1}^n P_i[G]$. Thus there is $i\in[1,n]$ such that $P_i[G]\subseteq P[G]$, hence $P_i\subseteq P$ and by $P\in\mathfrak X(D)$ equality holds.\\
Next we show that $D$ is weakly Krull. Let $P\in\mathfrak X(D)$, then $P[G]\in \mathfrak X(D[G])$, since we just proved that $D$ is $G$-UMT. Clearly, for all $x\in D$ we have $x\in P$ if and only if $x\in P[G]$, so $\mathfrak X(D)$ has finite character, since $\mathfrak X(D[G])$ has. Let $\frac{x}{y}\in\bigcap_{P\in\mathfrak X(D)}D_P$, then for all $P\in\mathfrak X(D)$ there are $x_P\in D$ and $y_P\in D\setminus P$ such that $\frac{x}{y}=\frac{x_P}{y_P}$. By what we just said, also $y_P\notin P[G]$, hence $\frac{x}{y}=\frac{x_P}{y_P}\in D[G]_{P[G]}$ for all $P\in\mathfrak X(D)$. Since $y\in D$, we also have $\frac{x}{y}\in D[G]_Q$ for all $Q\in\mathfrak X(D[G])$ not containing constants. Therefore in total we obtain that $\frac{x}{y}\in K\cap \bigcap_{Q\in\mathfrak X(D[G])}D[G]_Q=K\cap D[G]=D$.\\
"$\Leftarrow$" Let $D$ be weakly Krull and $G$-UMT. Obviously, $D[G]\subseteq \bigcap_{Q\in\mathfrak X(D[G])}D[G]_Q$, so it remains to prove the converse inclusion. For this, let $\frac{f}{g}\in\bigcap_{Q\in\mathfrak X(D[G])}D[G]_Q$. Then for all $Q\in\mathfrak X(D[G])$ there are $f_Q\in D[G]$ and $g_Q\in D[G]\setminus Q$ such that $\frac{f}{g}=\frac{f_Q}{g_Q}$, so $fg_Q=gf_Q$. We first show that $g\mid_{K[G]} f$: Since $K[G]$ is a factorial domain by \cite[Theorem 7.13]{GP74}, we have a prime factorization $g=p_1^{n_1}\cdots p_k^{n_k}$ in $K[G]$. Now $fg_Q=p_1^{n_1}\cdots p_k^{n_k}f_Q$ and if we choose $Q=p_iK[G]\cap D[G]\in\mathfrak X(D[G])$ then by $g_Q\notin Q$ we obtain $p_i^{n_i}\mid_{K[G]}f$. In total $g\mid_{K[G]} f$, hence without loss of generality we can write $\frac{f}{g}=\frac{f}{r}$ for appropriate $r\in D$ and obtain $fg_Q=rf_Q$.\\
Since $D$ is weakly Krull, we have a primary decomposition $rD=\bigcap_{i=1}^n I_i$ with $\sqrt{I_i}=P_i\in\mathfrak X(D)$. But now $fg_{P_i[G]}\in rD[G]=\bigcap_{j=1}^n I_j[G]$, in particular $fg_{P_i[G]}\in I_i[G]$. By \cite[Corollary 8.7]{Gil84}, $I_i[G]$ is a primary ideal with $\sqrt{I_i[G]}=\sqrt{I_i}[G]=P_i[G]$ and it follows that $f\in I_i[G]$ by $g_{P_i[G]}\notin P_i[G]$. Since we can do this for all $i\in[1,n]$ we obtain $f\in\bigcap_{i=1}^n I_i[G]=rD[G]$, hence $r\mid_{D[G]} f$ and $\frac{f}{r}\in D[G]$.\\
To see that the intersection is of finite character, just note that an $f\in D[G]$ can only be in finitely many height-one primes that come from $K[G]$ since it is a factorial domain and that it also can only be in finitely many height-one primes coming from $D$ since $D$ is weakly Krull.
\end{proof}

Next we characterize the weakly Krull monoid algebras over fields.

\begin{proposition}\label{2.6}
Let $K$ be a field and let $S$ be a torsion-free monoid with quotient group $G$ such that $G$ satisfies the ACC on cyclic subgroups. Then $K[S]$ is weakly Krull if and only if $S$ is weakly Krull and $K$-UMT. 
\end{proposition}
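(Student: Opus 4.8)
The plan is to run the proof of Proposition~\ref{2.5} with the roles interchanged: the monoid $S$ now plays the part of the domain $D$, the group algebra $K[G]$ with $G=\mathsf q(S)$ again serves as the factorial overring, and the coefficient field $K$ needs no separate treatment. Everything rests on the second inclusion of Lemma~\ref{2.1}: since $G$ is a group, a prime $\overline Q\cap K[S]$ with $\overline Q\in\mathfrak X(K[G])$ contains no monomial $X^{s}$ ($s\in S$), as monomials are units of $K[G]$; and by the remark following Lemma~\ref{2.1} every such prime does lie in $\mathfrak X(K[S])$. Hence each $Q\in\mathfrak X(K[S])$ is either $K[P]$ for some $P\in\mathfrak X(S)$, or it contains no monomial, in which case $K[S]_{Q}=K[G]_{\overline Q}$ (localise first at $N=\{X^{\alpha}\mid\alpha\in S\}$, so $K[S]_{N}=K[G]$).

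For "$\Rightarrow$", assume $K[S]$ is weakly Krull. By Remark~\ref{2.3} it suffices, to see that $S$ is $K$-UMT, to show $K[P]\in\mathfrak X(K[S])$ for every $P\in\mathfrak X(S)$. Fix $s\in P$; since $K[S]$ is weakly Krull, \cite[Theorem~3.1]{AMZ92} provides a primary decomposition $X^{s}K[S]=\bigcap_{i=1}^{n}A_{i}$ with $\sqrt{A_{i}}\in\mathfrak X(K[S])$. Each $A_{i}$ contains the monomial $X^{s}$, so the dichotomy above forces $\sqrt{A_{i}}=K[P_{i}]$ with $P_{i}\in\mathfrak X(S)$. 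As $K[P]$ is prime and contains $X^{s}K[S]$, it contains $\sqrt{X^{s}K[S]}=\bigcap_{i=1}^{n}K[P_{i}]$, so $K[P_{i}]\subseteq K[P]$ for some $i$, whence $P_{i}\subseteq P$ and, by minimality, $K[P]=K[P_{i}]=\sqrt{A_{i}}\in\mathfrak X(K[S])$. That $S$ is weakly Krull then follows as in Proposition~\ref{2.5}: $P\mapsto K[P]$ embeds $\mathfrak X(S)$ into $\mathfrak X(K[S])$ with $s\in P\iff X^{s}\in K[P]$, which transfers finite character; and for $x\in\bigcap_{P\in\mathfrak X(S)}S_{P}$, writing $x=a_{P}-b_{P}$ with $a_{P}\in S$ and $b_{P}\in S\setminus P$ gives $X^{x}\in K[S]_{K[P]}$ for all $P\in\mathfrak X(S)$, while $X^{x}$ is a unit of $K[G]=K[S]_{N}\subseteq K[S]_{Q}$ for every monomial-free $Q\in\mathfrak X(K[S])$, so $X^{x}\in\bigcap_{Q}K[S]_{Q}=K[S]$ and hence $x\in S$.

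For "$\Leftarrow$", assume $S$ is weakly Krull and $K$-UMT, and take $\theta\in\bigcap_{Q\in\mathfrak X(K[S])}K[S]_{Q}$. Restricting to the monomial-free primes and using $K[S]_{Q}=K[G]_{\overline Q}$ and the factoriality of $K[G]$ \cite[Theorem~7.13]{GP74} (so $K[G]$ is Krull) yields $\theta\in\bigcap_{\overline Q\in\mathfrak X(K[G])}K[G]_{\overline Q}=K[G]$. Now write $\theta=\sum_{j}c_{j}X^{h_{j}}$ with $h_{j}=a_{j}-b_{j}\in G$, $a_{j},b_{j}\in S$, and put $s=\sum_{j}b_{j}\in S$; then $\tilde f:=X^{s}\theta\in K[S]$. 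If $s$ is a unit of $S$ we are done; otherwise the weakly Krull property of $S$ gives $sS=\bigcap_{i=1}^{m}\mathfrak q_{i}$ with $\mathfrak q_{i}=sS_{P_{i}}\cap S$ a $P_{i}$-primary ideal, $P_{i}\in\mathfrak X(S)$, and since passing to the monoid algebra commutes with intersections of subsets of $S$ and $K[sS]=X^{s}K[S]$, we get $X^{s}K[S]=\bigcap_{i=1}^{m}K[\mathfrak q_{i}]$. The crucial ingredient is that $K[\mathfrak q_{i}]$ is a $K[P_{i}]$-primary ideal of $K[S]$ — the monoid counterpart of \cite[Corollary~8.7]{Gil84}. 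Granting this: since $S$ is $K$-UMT we have $K[P_{i}]\in\mathfrak X(K[S])$, so $\theta\in K[S]_{K[P_{i}]}$, say $\theta=f_{i}/g_{i}$ with $g_{i}\in K[S]\setminus K[P_{i}]$; then $\tilde fg_{i}=X^{s}f_{i}\in X^{s}K[S]\subseteq K[\mathfrak q_{i}]$ and $g_{i}\notin K[P_{i}]=\sqrt{K[\mathfrak q_{i}]}$ force $\tilde f\in K[\mathfrak q_{i}]$. Hence $\tilde f\in\bigcap_{i}K[\mathfrak q_{i}]=X^{s}K[S]$, i.e.\ $X^{s}\mid_{K[S]}\tilde f$, so $\theta=\tilde f/X^{s}\in K[S]$. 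Finite character of $\mathfrak X(K[S])$ follows as in Proposition~\ref{2.5}: a given element of $K[S]$ lies in only finitely many height-one primes from $K[G]$ (factoriality) and, by inspecting its support, in only finitely many $K[P]$ (finite character of $\mathfrak X(S)$).

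The step I expect to be the main obstacle is precisely the primariness of $K[\mathfrak q]$ for a $P$-primary ideal $\mathfrak q$ of $S$: there is no direct reference to invoke as in Proposition~\ref{2.5}, and because of possible cancellation among coefficients the leading-term argument (relative to a total order on $S$ compatible with addition) has to be carried out with care — concretely, one must show that $f\notin K[\mathfrak q]$ and $fg\in K[\mathfrak q]$ imply $\supp(g)\subseteq P$, by comparing the largest exponent of $f$ not in $\mathfrak q$ with the largest exponent of $g$ not in $P$ and using that $\mathfrak q$ is $P$-primary in $S$. An alternative circumventing this lemma is to prove $K[S]_{K[P]}\cap K[G]=K[S_{P}]$ for $P\in\mathfrak X(S)$ (exploiting that $S_{P}$ has a unique minimal non-empty prime) and then to deduce directly from $S=\bigcap_{P\in\mathfrak X(S)}S_{P}$ that $\theta\in\bigcap_{P}K[S_{P}]=K[S]$.
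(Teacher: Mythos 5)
Your ``$\Rightarrow$'' direction coincides with the paper's argument. Your ``$\Leftarrow$'' direction, however, takes a genuinely different route and, as written, has a gap at its central step. The paper first proves that $K[P]$ is a \emph{maximal $t$-ideal} of $K[S]$ for every $P\in\mathfrak X(S)$ (its Claim A, a nontrivial $v$-ideal computation), and then obtains a primary decomposition of $(X^{\alpha})$ from $K[S]=\bigcap_{Q\in t\text{-}\max(K[S])}K[S]_Q$, the components being $X^{\alpha}K[S]_{K[P_i]}\cap K[S]$, which are primary for the cheap reason that the $K[P_i]$ have height one (here $K$-UMT enters). You instead take the primary decomposition $s+S=\bigcap_i\mathfrak q_i$, $\mathfrak q_i=(s+S_{P_i})\cap S$, at the level of the monoid and push it into $K[S]$ via $X^sK[S]=\bigcap_iK[\mathfrak q_i]$; this bypasses the $t$-ideal machinery entirely, but everything then hinges on the assertion that $K[\mathfrak q]$ is $K[P]$-primary whenever $\mathfrak q$ is a $P$-primary ideal of $S$ --- which you only ``grant''. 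That is a genuine gap: there is indeed no reference in the paper's toolkit for it, and the argument you sketch (comparing the largest exponent of $f$ outside $\mathfrak q$ with the largest exponent $t_v$ of $g$ outside $P$) does not close, for exactly the cancellation reason you flag: cross terms $s_i+t_j$ with $s_i<s_u$, $s_i\notin\mathfrak q$, $t_j>t_v$, $t_j\in P$ may hit the exponent $s_u+t_v$ and annihilate its coefficient, and nothing in the primariness of $\mathfrak q$ excludes this.

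The needed lemma is true, though, and has a short proof that avoids leading terms altogether. First, $\sqrt{K[\mathfrak q]}=K[P]$: one inclusion holds since $K[\mathfrak q]\subseteq K[P]$ and $K[P]$ is prime (the claim in Lemma~\ref{2.1}); conversely, if all exponents $s_i$ of $h$ lie in $P$, choose $n_i$ with $n_is_i\in\mathfrak q$; for $N$ large every exponent of $h^N$ is $\sum_im_is_i$ with some $m_i\geq n_i$, hence lies in $\mathfrak q$, so $h^N\in K[\mathfrak q]$. Now let $fg\in K[\mathfrak q]$ with $g\notin K[P]$. Write $g=g_0+g_1$ with $\supp(g_0)\subseteq S\setminus P$, $g_1\in K[P]$, and choose $m$ with $g_1^m\in K[\mathfrak q]$; expanding $g_0^m=(g-g_1)^m$ and using $fg\in K[\mathfrak q]$ gives $fg_0^m\in K[\mathfrak q]$, where $g_0^m\neq 0$ and $\supp(g_0^m)\subseteq S\setminus P$ because $P$ is prime. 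Write $f=f_0+f_1$ with $f_1\in K[\mathfrak q]$ and $\supp(f_0)\subseteq S\setminus\mathfrak q$; then $f_0g_0^m\in K[\mathfrak q]$, but every exponent of $f_0g_0^m$ has the form $s+t$ with $s\notin\mathfrak q$ and $t\notin P$, hence lies outside $\mathfrak q$ since $\mathfrak q$ is $P$-primary in $S$; so $f_0g_0^m=0$, whence $f_0=0$ and $f\in K[\mathfrak q]$. With this lemma supplied, the rest of your ``$\Leftarrow$'' argument (the monoid-level decomposition from $S$ weakly Krull, the use of $K$-UMT to place the $K[P_i]$ in $\mathfrak X(K[S])$, the reduction $\theta\in K[G]$ via Krullness of $K[G]$ in place of the paper's explicit prime factorization, and the finite-character argument) is correct, and your proof becomes a legitimate, arguably more elementary alternative to the paper's Claims A and B; your fallback suggestion $K[S]_{K[P]}\cap K[G]=K[S_P]$ is neither obvious nor needed once the lemma is in place.
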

\begin{proof}
"$\Rightarrow$" Let $K[S]$ be weakly Krull and $P\in\mathfrak X(S)$. We have to show that $K[P]\in\mathfrak X(K[S])$. We already know by the claim in the proof of Lemma \ref{2.1} that $K[P]$ is a prime ideal, so it remains to prove its height is one. Let $\alpha\in P$, then we obtain a primary decomposition $X^{\alpha}K[S]=\bigcap_{i=1}^n A_i$ with $\sqrt{A_i}\in\mathfrak X(K[S])$. Moreover, since $X^{\alpha}\in A_i$ it holds that $\sqrt{A_i}=K[P_i]$ for $P_i\in\mathfrak X(S)$. It follows that $ K[P]\supseteq\sqrt{X^{\alpha}K[S]}=\sqrt{\bigcap_{i=1}^n A_i}=\bigcap_{i=1}^n \sqrt{A_i}=\bigcap_{i=1}^n K[P_i]$. Thus there is $i\in[1,n]$ such that $K[P_i]\subseteq K[P]$ and by $P\in\mathfrak X(S)$ equality follows, hence $K[P]\in\mathfrak X(K[S])$.\\
To prove that $S$ is weakly Krull, let $\alpha - \beta\in\bigcap_{P\in\mathfrak X(S)}S_P$. Then for all $P\in\mathfrak X(S)$ there exist $\alpha_P\in S$ and $\beta_P\in S\setminus P$ such that $\alpha - \beta=\alpha_P - \beta_P$. Since for all $a\in S$ we have $a\in P$ if and only if $a\in K[P]$, it follows that $X^{\alpha - \beta}=X^{\alpha_P - \beta_P}\in K[S]_{K[P]}$. Clearly, $\beta$ is in no height-one prime coming from $K[G]$, so in total we obtain $\alpha - \beta\in G\cap\bigcap_{P\in\mathfrak X(K[S])}K[S]_P=G\cap K[S]=S$. It is clear that $\mathfrak X(S)$ is of finite character, since $K[S]$ is weakly Krull.\\
"$\Leftarrow$" Clearly, $\mathfrak X(K[S])$ is of finite character, so it remains to prove that $K[S]=\bigcap_{Q\in\mathfrak X(K[S])}K[S]_Q$. We first prove the following\\
\textbf{Claim A:} For all $P\in\mathfrak X(S)$ we have $K[P]\in$ $t$-$\max(K[S])$.
\begin{proof}[Proof of Claim A]
Let $P\in\mathfrak X(S)$. To show that $K[P]$ is a maximal $t$-ideal, we prove that for every $f\in K[S]\setminus K[P]$ we have $(K[P], f)_t=K[S]$. Let $f\in K[S]\setminus K[P]$. Without loss of generality we can suppose that $f$ has no exponent in $P$, since if not, define $f'$ to be the part of $f$ containing all monomials not in $K[P]$, then $(K[P],f)=(K[P],f')$. Now $(K[S],f)_t=K[S]$ if and only if there exist $g_1,\hdots ,g_n\in K[P]$ such that $(g_1,\hdots ,g_n,f)_v=K[S]$ if and only if there exist $g_1,\hdots ,g_n\in K[P]$ such that $(g_1,\hdots ,g_n,f)^{-1}=K[S]$. Since $P$ is a maximal $t$-ideal of $S$, by the choice of $f=\sum_{i=1}^{l}a_iX^{s_i}$ (as always such that $s_1 <\hdots <s_l$) we have for all $i\in[1,l]$ that $(P,s_i)_t=S$, because $t$-$\dim(S)=1$. Thus for all $i\in[1,l]$ we can choose $b_1^{(i)},\hdots ,b_{n_i}^{(i)}\in P$ with $(b_1^{(i)},\hdots ,b_{n_i}^{(i)},s_i)^{-1}=S$. Set $\{b_1,\hdots , b_m\}=\{b_j^{(i)}\mid i\in[1,l], j\in[1,n_i]\}$. We claim that $(X^{b_1},\hdots ,X^{b_m},f)^{-1}=K[S]$. One inclusion is trivial, so let $h\in (X^{b_1},\hdots ,X^{b_m},f)^{-1}$ and note first, that $h\in K[G]$ since $hX^{b_1}\in K[S]$. Now write $h=\sum_{k=1}^{r}c_kX^{d_k}$ with $d_k\in G$ and $d_1<\hdots <d_r$ and we proceed by induction on $r$ to show that $h\in K[S]$: If $r=1$ this is clear, since for $h=X^{d}$ we have $d+s_1, d+b_1,\hdots, d+b_m\in K[S]$ hence in particular $d\in (b_1^{(1)},\hdots ,b_{n_1}^{(1)},s_1)^{-1}=S$. Now suppose that $r>1$, then by the same argument as for the case $r=1$ we obtain that $d_1\in S$ and thus $c_1X^{d_1}\in K[S]\subseteq (X^{b_1},\hdots ,X^{b_m},f)^{-1}$. Set $h_1=h-c_1X^{d_1}\in (X^{b_1},\hdots ,X^{b_m},f)^{-1}$. By induction hypothesis $h_1\in K[S]$ and therefore $h\in K[S]$.
\qedhere[Proof of Claim A]
\end{proof}
Using Claim A, we prove\\
\textbf{Claim B:} For all $\alpha\in S\setminus S^{\times}$ we have a primary decomposition of $(X^{\alpha})$ with associated primes of height one.
\begin{proof}[Proof of Claim B]
Let $\alpha\in S\setminus S^{\times}$. Then $X^{\alpha}$ is only contained in maximal $t$-ideals of the form $K[P]$ for $P\in\mathfrak X(S)$, since if $Q$ is a maximal $t$-ideal containing $X^{\alpha}$, then $Q\cap S$ is a non-empty prime $t$-ideal. One can see this as follows: Clearly, $K[Q\cap S]\subseteq Q$ and by \cite[Lemma 2.3.5]{BIK} we obtain $K[(Q\cap S)_t]=K[Q\cap S]_t\subseteq Q_t=Q$, hence $(Q\cap S)_t\subseteq 
Q\cap S$. In particular, if $Q$ is a maximal $t$-ideal, then $K[Q\cap S]\subseteq Q$ is a maximal $t$-ideal of $K[S]$, thus equality holds.\\
Since $S$ is weakly Krull, $\alpha$ is only contained in finitely many height-one primes of $S$, say $P_1,\hdots ,P_n$. It follows that the only maximal $t$-ideals of $K[S]$ containing $X^{\alpha}$ are the $K[P_1],\hdots ,K[P_n]$. Now
\begin{align*}
(X^{\alpha})=(\bigcap_{Q\in t\text{-}\max(K[S])} & K[S]_Q)X^{\alpha}=\bigcap_{Q\in t\text{-}\max(K[S])}K[S]_QX^{\alpha}=(\bigcap_{Q\in t\text{-}\max(K[S])}K[S]_QX^{\alpha})\cap K[S]=\\
& \bigcap_{Q\in t\text{-}\max(K[S])}(K[S]_QX^{\alpha}\cap K[S])=\bigcap_{i=1}^n (X^{\alpha}K[S]_{K[P_i]}\cap K[S]).
\end{align*}
It remains to prove, that the $X^{\alpha}K[S]_{K[P_i]}\cap K[S]$ are $K[P_i]$-primary, but this is clear, since the $K[P_i]$ are height-one primes.
\qedhere[Proof of Claim B]
\end{proof}
Now we take $\frac{f}{g}\in\bigcap_{Q\in\mathfrak X(K[S])}K[S]_Q$. Thus for every $Q\in\mathfrak X(K[S])$ there are $f_Q\in K[S]$ and $g_Q\in K[S]\setminus Q$ such that $\frac{f}{g}=\frac{f_Q}{g_Q}$, so $fg_Q=gf_Q$. We first show that $g\mid_{K[G]} f$: Since $K[G]$ is a factorial domain by \cite[Theorem 7.13]{GP74} we have a prime factorization $g=p_1^{n_1}\cdots p_k^{n_k}$ in $K[G]$. Now $fg_Q=p_1^{n_1}\cdots p_k^{n_k}f_Q$ and if we choose $Q=(p_i)\cap K[S]\in\mathfrak X(K[S])$ then by $g_Q\notin Q$ we obtain $p_i^{n_i}\mid_{K[G]}f$. In total $g\mid_{K[G]} f$, hence without loss of generality we write $\frac{f}{g}=\frac{f}{X^{\alpha}}$ for appropriate $\alpha\in S$ and obtain $fg_Q=X^{\alpha}f_Q$.\\
We show $f\in (X^{\alpha})$. By Claim B we pick a primary decomposition $(X^{\alpha})=\bigcap_{i=1}^n A_i$ with $\sqrt{A_i}=Q_i\in\mathfrak X(K[S])$. Let $i\in[1,n]$ and set $Q:=Q_i$. Then $fg_Q=X^{\alpha}f_Q\in (X^{\alpha})\subseteq A_i$, but $g_Q\notin Q=\sqrt{A_i}$, hence $f\in A_i$. Since $i$ was arbitrary, we obtain $f\in\bigcap_{i=1}^n A_i=(X^{\alpha})$.
\end{proof}

We are now in the position to prove our main result, characterizing the weakly Krull monoid algebras.

\begin{theorem}\label{2.7}
Let $S$ be a torsion-free monoid with quotient group $G$ such that $G$ satisfies the ACC on cyclic subgroups and let $D$ be a domain with quotient field $K$. Then $D[S]$ is weakly Krull if and only if $S$ is weakly Krull $K$-UMT and $D$ is weakly Krull $G$-UMT.
\end{theorem}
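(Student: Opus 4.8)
The plan is to deduce the theorem by stitching together Proposition 3.5 (the group-algebra case), Proposition 3.6 (the monoid-algebra-over-a-field case), and Lemma 3.4 (which says $S$-UMT $\Leftrightarrow$ $G$-UMT and $D$-UMT $\Leftrightarrow$ $K$-UMT), using the localization identifications $D[G]\cong D[S]_N$ with $N=\{X^\alpha\mid\alpha\in S\}$ and $K[S]\cong(D\setminus\{0\})^{-1}D[S]$. The key structural input is Lemma 3.1: the height-one primes of $D[S]$ are split (up to the UMT-defect) into those of the form $P[S]$ with $P\in\mathfrak X(D)$, those of the form $D[P]$ with $P\in\mathfrak X(S)$, and those that survive from $K[S]$ and from $D[G]$ — and the latter two families, by the remark after Lemma 3.1, are always genuinely height one. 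So the running theme is: test membership in a localization $D[S]_Q$ prime-by-prime, handling the three types separately.

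For the implication ``$D[S]$ weakly Krull $\Rightarrow$ each of the four conditions'', I would first show $D$ is $G$-UMT (equivalently $S$-UMT) and $S$ is $K$-UMT (equivalently $D$-UMT) by mimicking the $\Rightarrow$ arguments of Propositions 3.5 and 3.6: given $d\in P\in\mathfrak X(D)$, take an Anderson--Mott primary decomposition of $dD[S]$ (available since $D[S]$ is weakly Krull, by \cite[Theorem 3.1]{AMZ92}); because each primary component contains $d$, its radical lies over a nonzero prime of $D$, hence is of the form $P_i[S]$, and comparing radicals forces some $P_i[S]\subseteq P[S]$, whence $P[S]\in\mathfrak X(D[S])$; symmetrically with $X^\alpha$, $\alpha\in P\in\mathfrak X(S)$, the primary components have radicals of the form $D[P_i]$ (one must note, as in Claim B of Prop. 3.6, that a prime containing a monomial and lying over a $t$-maximal ideal is of this shape) giving $D[P]\in\mathfrak X(D[S])$. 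Then, to get $D$ weakly Krull: since $D$ is $S$-UMT, $P\mapsto P[S]$ embeds $\mathfrak X(D)$ into $\mathfrak X(D[S])$ compatibly with containment of elements of $D$, so finite character descends; and if $x/y\in\bigcap_{P\in\mathfrak X(D)}D_P$ then for each $P$ one has $x/y=x_P/y_P$ with $y_P\notin P$, hence $y_P\notin P[S]$, so $x/y$ lies in $D[S]_{P[S]}$ for every $P\in\mathfrak X(D)$, while $y\in D$ puts $x/y$ in every $D[S]_Q$ with $Q$ not meeting $D$; intersecting, $x/y\in K\cap D[S]=D$. The same argument with $K[S]$, $G$-valued exponents, and the monomial set $N$ gives $S$ weakly Krull.

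For the converse, assume $S$ is weakly Krull $K$-UMT and $D$ is weakly Krull $G$-UMT. Finite character of $\mathfrak X(D[S])$ is immediate from Lemma 3.1: an element of $D[S]$ lies in only finitely many $P[S]$ ($D$ weakly Krull), only finitely many $D[P]$ ($S$ weakly Krull), and — since by the $\Leftarrow$ directions of Propositions 3.5 and 3.6, $D[G]$ and $K[S]$ are weakly Krull — only finitely many primes pulled back from $D[G]$ and from $K[S]$. It remains to show $D[S]=\bigcap_{Q\in\mathfrak X(D[S])}D[S]_Q$. Take $f/g$ in the intersection. First run the $K[G]$-divisibility reduction exactly as in Prop. 3.5/3.6: $K[G]$ is factorial (\cite[Theorem 7.13]{GP74}), so factor $g=p_1^{n_1}\cdots p_k^{n_k}$; localizing at $Q=(p_i)\cap D[S]\in\mathfrak X(D[S])$ forces $p_i^{n_i}\mid_{K[G]}f$, so $g\mid_{K[G]}f$ and WLOG $f/g=f/r$ with $r\in D$, i.e. $fg_Q=rf_Q$ for all relevant $Q$. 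Now I want $r\mid_{D[S]}f$; since $D$ is weakly Krull, take a primary decomposition $rD=\bigcap I_i$ with $\sqrt{I_i}=P_i\in\mathfrak X(D)$, and note $I_i[S]$ is $P_i[S]$-primary (\cite[Corollary 8.7]{Gil84}) with $P_i[S]\in\mathfrak X(D[S])$ because $D$ is $S$-UMT; testing $fg_{P_i[S]}\in rD[S]\subseteq I_i[S]$ against $g_{P_i[S]}\notin P_i[S]$ gives $f\in I_i[S]$ for all $i$, hence $f\in\bigcap I_i[S]=rD[S]$, so $f/r\in D[S]$.

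The main obstacle I anticipate is precisely this last ``$r\mid_{D[S]}f$'' step — more specifically, making sure the primary-decomposition machinery is applied in the right ring. In the group-algebra proof it sufficed that $D[G]$ is weakly Krull so that $rD[G]$ has a primary decomposition; here, since $D[S]$ itself need not have been shown to have primary decompositions a priori (that is what we are proving), one must instead decompose $rD$ in $D$ and extend via \cite[Corollary 8.7]{Gil84}, which requires the $S$-UMT hypothesis to know $P_i[S]$ is genuinely height one (otherwise $I_i[S]$ need not be a valid component of a height-one primary decomposition). A symmetric subtlety is that after clearing the $K[G]$-content we landed with $r\in D$ rather than $X^\alpha$, so — unlike in Claim B of Proposition 3.6, where the monomial case used $t$-maximality of the $K[P]$ — here the relevant components come from $\mathfrak X(D)$ and we lean on the $D$-side hypotheses; one should double-check that no ``mixed'' height-one prime of $D[S]$ (one pulled back from $K[S]$, say) contains $r$, which holds because such a prime meets $D$ in $(0)$. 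Assembling these, $f/g=f/r\in D[S]$, completing the converse and the theorem.
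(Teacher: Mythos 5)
Your forward direction is fine: it redoes, inside $D[S]$, the work that the paper obtains more quickly by localizing ($D[G]=D[S]_N$ and $K[S]=(D\setminus\{0\})^{-1}D[S]$ are weakly Krull by \cite[Proposition 22.1]{HK98}) and then invoking Propositions \ref{2.5} and \ref{2.6}; apart from a wrong parenthetical justification (the radicals containing $X^\alpha$ are of the form $D[P_i]$ by the height-one argument of Lemma \ref{2.1}, not by any $t$-maximality consideration), it is correct. The converse, however, has a genuine gap at the normalization step ``WLOG $f/g=f/r$ with $r\in D$''. From $g\mid_{K[G]}f$ you only know $f/g\in K[G]$; clearing denominators in the coefficients \emph{and} in the exponents gives a representation $f/g=f'/(rX^{\alpha})$ with $f'\in D[S]$, $r\in D\setminus\{0\}$, $\alpha\in S$, and the monomial factor cannot be dropped: a denominator lying in $D$ exists only if $f/g$ already belongs to $K[S]$, which is not known at this stage (it is, in effect, part of what is being proved). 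Your primary-decomposition argument then treats only the constant part $r$, so the divisibility $X^{\alpha}\mid_{D[S]}f$ is never established --- and that is exactly where the hypotheses on $S$ must enter. Symptomatically, in your converse the assumption that $S$ is weakly Krull $K$-UMT is used only for finite character; specializing to $D=K$ a field, your argument would yield the containment $\bigcap_{Q\in\mathfrak X(K[S])}K[S]_Q\subseteq K[S]$ essentially for free, rendering Claims A and B in the proof of Proposition \ref{2.6} superfluous, which they are not.

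Two repairs are possible. You can finish your computation by handling the monomial separately: after extracting $r$ as you do, one still has $f''/X^{\alpha}\in\bigcap_{Q\in\mathfrak X(D[S])}D[S]_Q$, and one shows $X^{\alpha}\mid_{D[S]}f''$ using that $K[S]$ is weakly Krull (Proposition \ref{2.6}), that height-one primes of $K[S]$ contract to height-one primes of $D[S]$ (the remark after Lemma \ref{2.1}), and the identity $X^{\alpha}K[S]\cap D[S]=X^{\alpha}D[S]$. Alternatively --- and this is the paper's route --- avoid the element computation altogether: by Propositions \ref{2.5} and \ref{2.6} both $D[G]$ and $K[S]$ are weakly Krull, and since every height-one prime of these two localizations contracts to a height-one prime of $D[S]$, one gets $\bigcap_{Q\in\mathfrak X(D[S])}D[S]_Q\subseteq D[G]\cap K[S]=D[S]$, with finite character exactly as you argue from Lemma \ref{2.1}; this is the content of the paper's short ``$D[S]=D[G]\cap K[S]$ is weakly Krull''.
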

\begin{proof}
"$\Leftarrow$" If $D$ is weakly Krull $G$-UMT and $S$ is weakly Krull $K$-UMT, then $D[G]$ and $K[S]$ are weakly Krull by Proposition \ref{2.5} and Proposition \ref{2.6}. Thus $D[S]=D[G]\cap K[S]$ is weakly Krull.\\
"$\Rightarrow$" Let $D[S]$ be weakly Krull, then by \cite[Proposition 22.1]{HK98} both, $D[G]=D[S]_N$ (where $N=\{X^{\alpha}\mid \alpha\in S\}$) and $K[S]=(D\setminus \{0\})^{-1}D[S]$ are weakly Krull, hence $D$ is weakly Krull $G$-UMT by Proposition \ref{2.5} and $S$ is weakly Krull $K$-UMT by Proposition \ref{2.6}.
\end{proof}

Next (borrowing heavily from Chang \cite{Chang09}) we give an example of a group algebra over a field that is not weakly Krull, showing that the assumption of the group satisfying the ACC on cyclic subgroups cannot be omitted. Note that for a field $K$ and a group $G$ we always have that $K$ is weakly Krull $G$-UMT and that $G$ is weakly Krull $K$-UMT.

\begin{example}
Let $K$ be a field and let $\Q$ be the additive group of rational numbers. For each $n\in\N$ let $G_n$ be the subgroup of $\Q$ generated by $\frac{1}{2^n}$.
\begin{enumerate}
\item $K[G_n]$ is a principal ideal domain.
\item $1+X^{\frac{1}{2^n}}$ is a prime element of $K[G_n]$.
\item If $0\leq m<n$, then $1+X^{\frac{1}{2^m}}\notin (1+X^{\frac{1}{2^n}})K[G_n]$.
\item $t$-$\dim(K[\Q])=1$.
\item $K[\Q]$ is not weakly Krull.
\end{enumerate}
\end{example}
\begin{proof}
1. Since $G_n\cong \Z$ as additive groups, we have $K[G_n]\cong K[\Z]$, which is well known to be a PID.\\
2. Follows from the fact that $1+y$ is a prime element in $K[y]$ and that prime elements are lifted to prime elements under localization, provided they do not become units.\\
3. If we set $y=X^{\frac{1}{2^n}}$ and $k=2^{n-m}$, then $y$ is an indeterminate over $K$ and $K[G_n]\cong K[y,y^{-1}]$. Now $1+y^k\notin (1+y)K[y]$ since $k$ is even, thus $1+y^k\notin (1+y)K[y,y^{-1}]$, because $(1+y)K[y]$ is a prime ideal.\\
4. By \cite[Corollary 12.11.1]{Gil84} it suffices to prove that $G_n\subseteq \Q$ is a root extension, to prove that $K[G_n]\subseteq K[\Q]$ is an integral extension and hence $K[\Q]$ is of Krull dimension 1 by 1. and thus of $t$-dimension 1. To prove that we have a root extension, let $\frac{a}{b}\in\Q$ with $b\in\N$. Then $b\frac{a}{b}=a=(2^na)\frac{1}{2^n}\in G_n$.\\
5. We show that $1-X$ is in infinitely many height one prime ideals of $K[\Q]$. Clearly, $1-X=(1+X^{\frac{1}{2}})(1+X^{\frac{1}{2^2}})\cdots (1+X^{\frac{1}{2^n}})(1-X^{\frac{1}{2^n}})$, so it suffices to show that for $m\neq n$ the elements $1+X^{\frac{1}{2^n}}$ and $1+X^{\frac{1}{2^m}}$ are never in the same prime ideal, since by $t$-dimension is 1 they are contained in height-one prime ideals. Assume to the contrary that this was the case, say $P$ is a prime ideal containing $1+X^{\frac{1}{2^n}}$ and $1+X^{\frac{1}{2^m}}$ for $m<n$. Then $1+X^{\frac{1}{2^n}}\in P\cap K[G_n]$, hence $(1+X^{\frac{1}{2^n}})=P\cap K[G_n]$ by 1. and 2. Thus $1+X^{\frac{1}{2^m}}\in (1+X^{\frac{1}{2^n}})K[G_n]$, contradicting 3.
\end{proof}


\section{Applications}

In this section, we investigate the $K$-(resp. $G$-) UMT property for special monoids (resp. domains). Based on these results, we give applications of our main result Theorem \ref{2.7}. We start with non-negative monoids of totally ordered abelian groups, whose monoid algebras were recently studied in \cite{Gotti} and \cite{Gotti2}.

\begin{lemma}\label{2.8}
Let $(G,\leq)$ be a totally ordered abelian group satisfying the ACC on cyclic subgroups and $S=\{g\in G\mid g\geq 0\}$ be the non-negative monoid of $G$. Then $S$ is $K$-UMT for all fields $K$.
\end{lemma}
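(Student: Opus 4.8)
The plan is to reduce, via Remark~\ref{2.3}, to showing that $K[P]\in\mathfrak X(K[S])$ for every $P\in\mathfrak X(S)$ (if $\mathfrak X(S)=\emptyset$ there is nothing to show), and to obtain this by computing the localization $K[S]_{K[P]}$ explicitly and recognising it as a rank-one valuation ring.

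I would first analyse the monoid side. Fix $P\in\mathfrak X(S)$. The complement $T:=S\setminus P$ is a downward closed submonoid of $S$ (if $s'\le s\in T$ with $s'\in S$, then $s'\in T$), so $H:=T-T$ is a convex subgroup of $G$ with $H\cap S=T$, i.e.\ $P=\{s\in S\mid s\notin H\}$. Since the convex subgroups of a totally ordered abelian group form a chain under inclusion, minimality of $P$ among the non-empty prime ideals of $S$ forces $H$ to be maximal among the proper convex subgroups of $G$; hence $G/H$ is archimedean and, by H\"older's theorem, order-embeds into $(\R,+)$. Fix such an embedding $\iota$, let $\pi\colon G\to G/H$ be the quotient map and set $v_0:=\iota\circ\pi\colon G\to\R$; then $v_0(S)\subseteq\R_{\ge 0}$ and, for $s\in S$, one has $s\in P$ if and only if $v_0(s)>0$.

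Next I would turn $v_0$ into a valuation. Fixing a total order $<$ on $G$ compatible with the addition, define for $0\ne f\in K[G]$ the value $v(f):=v_0(\sigma)$ where $\sigma$ is the $<$-least element of $\supp(f)$ (equivalently, the least $v_0$-value attained on $\supp(f)$), and extend by $v(f/g):=v(f)-v(g)$ to $F:=\mathsf q(K[S])=\mathsf q(K[G])$. That this is a valuation uses that $K[H]$ is a domain: the $v_0$-leading part of $f$ equals $X^{\sigma}u_f$ with $u_f\in K[H]\setminus\{0\}$, so the $v_0$-leading parts of $f$ and $g$ multiply to a nonzero element and $v(fg)=v(f)+v(g)$. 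The value group of $v$ is $\iota(G/H)$, a nonzero subgroup of $\R$, so $v$ has rank one; its valuation ring $\mathcal O_v\subseteq F$ contains $K[S]$ (as $v\ge 0$ on $K[S]\setminus\{0\}$), and a support computation shows $K[P]=\{f\in K[S]\mid v(f)>0\}\cup\{0\}=\mathfrak m_v\cap K[S]$, where $\mathfrak m_v$ is the maximal ideal of $\mathcal O_v$. In particular the elements of $K[S]\setminus K[P]$ are units of $\mathcal O_v$, so $K[S]_{K[P]}\subseteq\mathcal O_v$.

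The heart of the argument --- and the step I expect to be most delicate --- is the reverse inclusion $\mathcal O_v\subseteq K[S]_{K[P]}$. Given $0\ne x\in\mathcal O_v$, write $x=f/g$ with $0\ne f,g\in K[S]$, so $v(f)\ge v(g)$, let $\sigma_f,\sigma_g$ be the $<$-least elements of $\supp(f),\supp(g)$, and put $\tau:=\min(\sigma_f,\sigma_g)\in S$. Using $v(f)\ge v(g)$, i.e.\ $\pi(\sigma_f)\ge\pi(\sigma_g)$, one checks that $\pi(\tau)=\pi(\sigma_g)$, that $fX^{-\tau}$ and $gX^{-\tau}$ lie in $K[S]$ (every exponent of $f$, resp.\ of $g$, is $\ge\tau$), and that $v(gX^{-\tau})=v(g)-v_0(\tau)=0$, whence $gX^{-\tau}\notin K[P]$; thus $x=(fX^{-\tau})/(gX^{-\tau})\in K[S]_{K[P]}$. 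Consequently $K[S]_{K[P]}=\mathcal O_v$, which has Krull dimension $1$ because it is a rank-one valuation ring, so $\operatorname{ht}_{K[S]}K[P]=\dim K[S]_{K[P]}=1$. Since $K[P]$ is a nonzero prime ideal of $K[S]$ by the Claim in the proof of Lemma~\ref{2.1}, this gives $K[P]\in\mathfrak X(K[S])$, and Remark~\ref{2.3} completes the proof. The only real subtlety I foresee is the bookkeeping with leading terms and the choice of the clearing monomial $X^{\tau}$; the chain structure of the convex subgroups of $G$, H\"older's embedding theorem, and the one-dimensionality of a rank-one valuation ring are all standard.
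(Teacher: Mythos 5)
Your proof is correct, but it takes a genuinely different route from the paper's. The paper argues by contradiction: if some $K[P]$ with $P\in\mathfrak X(S)$ were not of height one, a height-one prime strictly below it would have to be the contraction of a principal prime $fK[G]$ --- here the factoriality of $K[G]$, and hence the ACC hypothesis via Gilmer--Parker, enters --- and then $fX^{-s_1}$, with $s_1$ the least exponent of $f$, lies in that prime but in $K[S]\setminus K[P]$, a contradiction. You instead compute $K[S]_{K[P]}$ directly: $S\setminus P$ spans a maximal proper convex subgroup $H$ of $G$ (maximality coming from minimality of $P$), H\"older's theorem embeds $G/H$ into $(\R,+)$, and the resulting least-exponent valuation $v$ on $\mathsf q(K[S])$ has rank one with $\mathcal O_v=K[S]_{K[P]}$, so $K[P]$ has height one; the multiplicativity of $v$ (via the $v_0$-leading parts, which lie in $X^{\sigma}K[H]$ with $K[H]$ a domain) and the clearing-monomial argument for $\mathcal O_v\subseteq K[S]_{K[P]}$ both check out, as does the reduction via Remark \ref{2.3}. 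One small point of presentation: take $<$ to be the given order $\le$ of $G$ rather than an arbitrary compatible total order, since the monotonicity of $v_0$, the identity $\pi(\tau)=\pi(\sigma_g)$, and the memberships $fX^{-\tau},gX^{-\tau}\in K[S]$ are all used with respect to that specific order. What your approach buys: it never invokes the ACC on cyclic subgroups, so it actually establishes the lemma for every totally ordered abelian group, and it yields the extra structural information that $K[S]_{K[P]}$ is a rank-one valuation domain; what the paper's approach buys is brevity, since it leans on the known factoriality of $K[G]$ and disposes of the statement in a few lines.
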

\begin{proof}
Assume to the contrary, that $S$ is not $K$-UMT for some field $K$. Then there is $P\in\mathfrak X(S)$ such that $K[P]\notin \mathfrak X(K[S])$, i.e. there is $A\in\mathfrak X(K[S])$ with $A\subsetneq K[P]$. Clearly, $A$ has no monomials, so has to be of the form $A=Q\cap K[S]$ for some $Q\in \mathfrak X(K[G])$. Since $K[G]$ is factorial, $Q=fK[G]$ for some prime $f\in K[G]$, hence the property $A\subseteq K[P]$ translates into "If $g\in K[S]$ such that $f\mid_{K[G]}g$, then $g\in K[P]$." Now let $f\in K[G]$ be as above and let $s_1$ be the smallest exponent of $f$. Then let $s\in G$ with $s+s_1=0$, thus $fX^{s}=:g\in K[S]$ and $f\mid_{K[G]}g$ but $g\notin K[P]$; a contradiction.
\end{proof}

\begin{corollary}
Let $D$ be a domain, $(G,\leq)$ be a totally ordered abelian group satisfying the ACC on cyclic subgroups and $S=\{g\in G\mid g\geq 0\}$ be the non-negative monoid of $G$. Then $D[S]$ is weakly Krull if and only if $D$ is weakly Krull $G$-UMT and $S$ is weakly Krull.
\end{corollary}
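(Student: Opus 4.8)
The plan is to obtain this corollary as an immediate specialization of Theorem~\ref{2.7}, the only genuinely new ingredient being Lemma~\ref{2.8}. First I would verify that the hypotheses of Theorem~\ref{2.7} are in force: the quotient group of $S$ is $G$ itself, since for $g\in G$ we have either $g\geq 0$, whence $g\in S\subseteq\mathsf q(S)$, or $g<0$, whence $-g\in S$ and $g=-(-g)\in\mathsf q(S)$; thus $\mathsf q(S)=G$, which satisfies the ACC on cyclic subgroups by assumption. Hence Theorem~\ref{2.7} applies with this $S$, this $G$, and $K=\mathsf q(D)$.

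Next, Theorem~\ref{2.7} gives that $D[S]$ is weakly Krull if and only if $S$ is weakly Krull $K$-UMT and $D$ is weakly Krull $G$-UMT. Since $G$ is a totally ordered abelian group satisfying the ACC on cyclic subgroups and $S$ is its non-negative monoid, Lemma~\ref{2.8} shows that $S$ is $K$-UMT for \emph{every} field $K$, in particular for the quotient field of $D$. Therefore the clause ``$S$ is weakly Krull $K$-UMT'' is equivalent to ``$S$ is weakly Krull,'' and substituting this into the equivalence furnished by Theorem~\ref{2.7} produces exactly the asserted statement.

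Because every step is a direct appeal to a result already established in the excerpt, I do not anticipate a real obstacle here. The single point meriting a written line is the identification $\mathsf q(S)=G$, which is what allows both the ACC hypothesis of Theorem~\ref{2.7} and Lemma~\ref{2.8} to be invoked with the correct group; beyond that, the proof is just the observation that the $K$-UMT clause in Theorem~\ref{2.7} is automatically satisfied in this situation, so it may be dropped from the conjunction.
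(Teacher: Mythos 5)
Your proposal is correct and is exactly the argument the paper intends: the corollary is stated immediately after Lemma~\ref{2.8} as a direct specialization of Theorem~\ref{2.7}, with the $K$-UMT clause for $S$ discharged by that lemma (and your explicit check that $\mathsf q(S)=G$ is the right small point to record).
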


We are now able to prove that the classical UMT property is equivalent to our notion of being $\N_0$-UMT, provided the domain is weakly Krull. For this, we need the following preparatory

\begin{lemma}\label{2.9}
Let $D$ be a weakly Krull domain and $\N_0$-UMT. Then the polynomial ring $D[X]$ is weakly Krull.
\end{lemma}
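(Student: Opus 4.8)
The plan is to observe that the polynomial ring $D[X]$ is precisely the monoid algebra $D[\N_0]$ of $D$ over the free monoid $\N_0$ on one generator, and then to invoke the characterization of weakly Krull monoid algebras from Theorem~\ref{2.7}. Concretely, I would set $S=\N_0$, with quotient group $G=\mathsf q(\N_0)=\Z$; since $\Z$ is finitely generated it satisfies the ACC on cyclic subgroups, so Theorem~\ref{2.7} applies and reduces the claim to verifying that $\N_0$ is weakly Krull $K$-UMT and that $D$ is weakly Krull $G$-UMT, where $K=\mathsf q(D)$.

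On the monoid side, $\N_0$ is factorial and hence weakly Krull. Moreover $\N_0=\{g\in\Z\mid g\geq 0\}$ is exactly the non-negative monoid of the totally ordered abelian group $(\Z,\leq)$, which satisfies the ACC on cyclic subgroups, so Lemma~\ref{2.8} yields that $\N_0$ is $K$-UMT for every field $K$, in particular for $K=\mathsf q(D)$. On the domain side, $D$ is weakly Krull by hypothesis, and $D$ is $\N_0$-UMT by hypothesis; since $\mathsf q(\N_0)=\Z$, Lemma~\ref{2.4}(1) turns this into the statement that $D$ is $\Z$-UMT. All hypotheses of Theorem~\ref{2.7} being met (with $S=\N_0$), we conclude that $D[X]=D[\N_0]$ is weakly Krull.

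There is no genuine obstacle here beyond the bookkeeping: one must remember to present $\N_0$ as the non-negative monoid of $(\Z,\leq)$ in order to apply Lemma~\ref{2.8}, and to pass between the ``$\N_0$-UMT'' and ``$\Z$-UMT'' formulations via Lemma~\ref{2.4}. Alternatively, one could bypass Theorem~\ref{2.7} and argue directly that $D[X]=D[\N_0]=D[\Z]\cap K[\N_0]$, where $D[\Z]$ is weakly Krull by Proposition~\ref{2.5} (using that $D$ is weakly Krull $\Z$-UMT) and $K[\N_0]=K[X]$ is a UFD, hence weakly Krull; but the route through Theorem~\ref{2.7} is the most economical.
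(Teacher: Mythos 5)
Your proposal is correct and follows essentially the same route as the paper: both reduce the claim to Theorem~\ref{2.7} by noting that $\N_0$ is weakly Krull (being factorial) and $K$-UMT by Lemma~\ref{2.8}, with $D$ weakly Krull and $\N_0$-UMT by hypothesis. Your explicit use of Lemma~\ref{2.4} to pass from $\N_0$-UMT to $\Z$-UMT is just a bookkeeping step the paper leaves implicit.
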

\begin{proof}
Since $D$ is weakly Krull and $\N_0$-UMT it suffices to show that $\N_0$ is weakly Krull and $K$-UMT in order to apply Theorem \ref{2.7}, but that $\N_0$ is weakly Krull is well known (it is factorial) and the property of being $K$-UMT was shown in Lemma \ref{2.8}.
\end{proof}

\begin{proposition}\label{2.10}
Let $D$ be a weakly Krull domain, then the following are equivalent:
\begin{enumerate}
\item[(a)] $D$ is UMT,
\item[(b)] For all $n\in\N$, we have $D[X_1,\hdots ,X_n]$ is UMT,
\item[(c)] $D$ is $\N_0$-UMT,
\item[(d)] For all $n\in\N$, we have $D$ is $\N_0^n$-UMT,
\item[(e)] $D$ is $\Z$-UMT,
\item[(f)] For all $n\in\N$, we have $D$ is $\Z^n$-UMT.
\end{enumerate}
\end{proposition}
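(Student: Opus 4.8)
The plan is to make condition $(c)$ the hub. Two groups of implications come essentially for free: on the one hand $(d)\Rightarrow(c)$ is the case $n=1$ (since $\N_0^1=\N_0$), and Lemma~\ref{2.4}\,(1) applied to $S=\N_0$ resp.\ $S=\N_0^n$ gives $(c)\Leftrightarrow(e)$ and $(d)\Leftrightarrow(f)$, because $\mathsf q(\N_0)=\Z$ and $\mathsf q(\N_0^n)=\Z^n$; on the other hand the classical fact that a domain $R$ is a UMT-domain if and only if $R[X]$ is, iterated, gives $(a)\Leftrightarrow(b)$. It therefore remains to prove $(a)\Leftrightarrow(c)$ and $(a)\Rightarrow(d)$; combined with $(d)\Rightarrow(c)$ and the equivalences just mentioned, this yields that all six conditions are equivalent.

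For $(a)\Rightarrow(c)$, by Remark~\ref{2.3}\,(1) I must show $P[X]\in\mathfrak X(D[X])$ for each $P\in\mathfrak X(D)$. Since $P[X]=PD[X]$ is a nonzero prime of $D[X]$ by \cite[Corollary~8.2]{Gil84}, it suffices to exclude a prime $Q$ with $(0)\subsetneq Q\subsetneq P[X]$. Contracting to $D$ yields $Q\cap D\subseteq P$; the case $Q\cap D=P$ is impossible, as it would force $PD[X]\subseteq Q$, so $Q\cap D=(0)$ because $P$ has height one, i.e.\ $Q$ is an upper to zero. As $D$ is a UMT-domain, $Q$ is then a maximal $t$-ideal of $D[X]$. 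On the other hand $D$ is weakly Krull, so $P$ is a maximal $t$-ideal of $D$, in particular a $t$-ideal; and since the extension of a $t$-ideal of $D$ to $D[X]$ is again a $t$-ideal, $P[X]$ is a proper $t$-ideal of $D[X]$ strictly containing $Q$, contradicting the maximality of $Q$ among proper $t$-ideals. Hence $P[X]$ has height one. For the converse $(c)\Rightarrow(a)$, Lemma~\ref{2.9} shows $D[X]$ is weakly Krull, so its height-one primes are precisely its maximal $t$-ideals; and every upper to zero $Q$ of $D[X]$ has height one, since a prime strictly between $(0)$ and $Q$ would again meet $D$ in $(0)$ and, after localizing at $D\setminus\{0\}$, would produce a chain of length two in $K[X]$ (with $K=\mathsf q(D)$), which is impossible as $\dim K[X]=1$. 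Thus every upper to zero of $D[X]$ is a maximal $t$-ideal, i.e.\ $D$ is a UMT-domain.

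For $(a)\Rightarrow(d)$, fix $n$. By the classical equivalence, $D[X_1,\dots,X_k]$ is a UMT-domain for every $k$. An induction on $k$ shows it is also weakly Krull: for $k=0$ this is the hypothesis on $D$, and if $D[X_1,\dots,X_{k-1}]$ is weakly Krull then, being a UMT-domain, it is $\N_0$-UMT by $(a)\Leftrightarrow(c)$ applied to it, so $D[X_1,\dots,X_k]=D[X_1,\dots,X_{k-1}][X_k]$ is weakly Krull by Lemma~\ref{2.9}. In particular $D[\N_0^n]=D[X_1,\dots,X_n]$ is weakly Krull, and since $\mathsf q(\N_0^n)=\Z^n$ satisfies the ACC on cyclic subgroups, Theorem~\ref{2.7} forces $D$ to be $\Z^n$-UMT, hence $\N_0^n$-UMT by Lemma~\ref{2.4}\,(1). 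As $n$ was arbitrary, this proves $(d)$.

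I expect the step requiring the most care to be $(a)\Leftrightarrow(c)$, which rests on two standard facts about the $t$-operation invoked above -- that in a weakly Krull domain the height-one primes coincide with the maximal $t$-ideals, and that a $t$-ideal of $D$ extends to a $t$-ideal of $D[X]$ -- and these should be cited or checked carefully. The second delicate point is that the passage to arbitrarily many indeterminates in $(a)\Rightarrow(d)$ genuinely relies on the classical stability of the UMT property under adjunction of one indeterminate; without it the induction would be circular, since one cannot otherwise infer from $D$ being $\N_0$-UMT that $D[X]$ is $\N_0$-UMT.
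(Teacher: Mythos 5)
Your proof is correct, and its global skeleton (Lemma~\ref{2.4} for $(c)\Leftrightarrow(e)$ and $(d)\Leftrightarrow(f)$, the Fontana--Gabelli--Houston stability of UMT under polynomial extensions for $(a)\Leftrightarrow(b)$, Lemma~\ref{2.9} plus an induction and a localization/Theorem~\ref{2.7} step for the many-variable conditions) matches the paper; the genuine difference lies in how you settle the central equivalence $(a)\Leftrightarrow(c)$. The paper disposes of both directions by citing \cite[Proposition 4.11]{AHZ93} (``$D$ weakly Krull UMT $\Leftrightarrow$ $D[X]$ weakly Krull'') and, for $(a)\Rightarrow(c)$, by observing that the argument in Proposition~\ref{2.5} showing ``$D[G]$ weakly Krull $\Rightarrow$ $D$ is $G$-UMT'' never used that $G$ is a group. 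You instead argue directly: for $(a)\Rightarrow(c)$ you show any prime strictly below $P[X]$ must be an upper to zero, hence a maximal $t$-ideal by UMT, which cannot sit strictly inside the proper $t$-ideal $P[X]$; for $(c)\Rightarrow(a)$ you use Lemma~\ref{2.9} and the fact that uppers to zero always have height one together with the identification of height-one primes with maximal $t$-ideals in the weakly Krull ring $D[X]$. This buys a more self-contained treatment (notably, your $(a)\Rightarrow(c)$ does not pass through weak Krullness of $D[X]$ at all), at the price of importing two standard $t$-operation facts that should indeed be cited, as you note: that a weakly Krull domain has $t$-dimension one, so its maximal $t$-ideals are exactly its height-one primes (a fact the paper itself uses implicitly, e.g.\ in Claim A of Proposition~\ref{2.6}), and that $(I[X])_t=I_t[X]$, so $t$-ideals of $D$ extend to $t$-ideals of $D[X]$ (Kang). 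Your handling of the induction in $(a)\Rightarrow(d)$, flagged explicitly as resting on the classical one-variable UMT stability to avoid circularity, is exactly the point the paper's terse proof of $(b)\Rightarrow(f)$ also relies on, and your use of Theorem~\ref{2.7} there in place of the paper's appeal to Proposition~\ref{2.5} after localizing via \cite[Proposition 22.1]{HK98} is an equivalent shortcut.
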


\begin{proof}
$(a)\Leftrightarrow (b)$ This is \cite[Theorem 2.4]{FGH98}.\\
$(a)\Leftrightarrow (c)$ $D$ weakly Krull UMT implies $D[X]$ weakly Krull by \cite[Proposition 4.11]{AHZ93}. Note that our proof of "$D[G]$ weakly Krull implies $D$ is $G$-UMT" in Proposition \ref{2.5} did not use the fact that $G$ was a group, so it shows $D$ is $\N_0$-UMT here. Conversely, if $D$ is weakly Krull and $\N_0$-UMT, then by Lemma \ref{2.9} $D[X]$ is weakly Krull, hence $D$ is UMT \cite[Proposition 4.11]{AHZ93}.\\
$(e)\Leftrightarrow (c)$ This follows from Lemma \ref{2.4}.\\
$(d)\Leftrightarrow (f)$ This follows from Lemma \ref{2.4}.\\
$(b)\Rightarrow (f)$ Let $n\in\N$, then since $D[\N_0]$ is weakly Krull UMT, $D[\N_0^n]$ is weakly Krull for all $n\geq 2$, hence $D[\Z^n]$ is by \cite[Proposition 22.1]{HK98}. Thus $D$ is $\Z^n$-UMT by Proposition \ref{2.5}.\\
$(d)\Rightarrow (c)$ This is trivial.
\end{proof}

Next we apply our main result to monoid algebras over affine monoids, which for example occur (over fields) when studying polytopes (see \cite{BG09}). Recall that an affine monoid is a torsion-free finitely generated monoid.

\begin{lemma}\label{2.11}
Let $S$ be an affine monoid. Then there is $n\in\N$ such that the quotient group $\mathsf q(S)\cong\Z^n$. In particular, $\mathsf q(S)$ satisfies the ACC on cyclic subgroups.
\end{lemma}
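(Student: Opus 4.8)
The plan is to observe that the statement is essentially the structure theorem for finitely generated abelian groups, so it suffices to verify that $\mathsf q(S)$ satisfies that theorem's two hypotheses --- finite generation and torsion-freeness --- and then to read off both assertions.

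First I would record that $\mathsf q(S)$ is finitely generated as a group: if $s_1,\dots,s_k$ generate $S$ as a monoid, then every element of $\mathsf q(S)$ is of the form $a-b$ with $a,b\in S$ and hence lies in the subgroup generated by $s_1,\dots,s_k$. Next I would check that the group $\mathsf q(S)$ is torsion-free: if $g=a-b$ with $a,b\in S$ satisfies $ng=0$ for some $n\in\N$, then $na=nb$ in $S$, and since a torsion-free monoid is in particular cancellative, this forces $a=b$, i.e. $g=0$. By the structure theorem, a finitely generated torsion-free abelian group is free of finite rank, so $\mathsf q(S)\cong\Z^n$ for some finite $n$ (necessarily $n\geq 1$ unless $S$ is trivial).

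For the ``in particular'' clause I would note that $\Z^n$ is a finitely generated module over the Noetherian ring $\Z$, hence a Noetherian $\Z$-module, so every ascending chain of subgroups --- in particular every ascending chain of cyclic subgroups --- stabilizes; equivalently, $\Z^n$ is of type $(0,0,\dots)$ in the sense of \cite[\S 14]{Gil84}. I do not expect a genuine obstacle here; the only step worth a moment's care is the passage of torsion-freeness from the monoid $S$ to the group $\mathsf q(S)$, which is immediate from cancellativity as above.
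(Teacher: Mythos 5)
Your argument is correct, but it takes a genuinely different (and more self-contained) route than the paper. The paper's proof starts from the fact that an affine monoid is a finitely generated submonoid of $\Z^m$ for some $m$, and then observes that $\mathsf q(S)$ is a $\Z$-submodule of the free module $\Z^m$, hence itself free of rank $n\le m$ (submodules of finitely generated free modules over a PID are free); the ACC clause is left as well known. You instead verify directly that $\mathsf q(S)$ is finitely generated as a group (by the monoid generators of $S$) and torsion-free, and then quote the structure theorem for finitely generated abelian groups. Since this paper defines an affine monoid as a torsion-free finitely generated monoid (the embedding into some $\Z^m$ being a consequence rather than the definition), your route avoids presupposing that embedding, at the cost of invoking the structure theorem; the paper's route is shorter if one takes the embedding for granted. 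Your Noetherian-module argument for the ACC on cyclic subgroups is a perfectly adequate replacement for the paper's unstated ``well known'' step.

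One small correction of attribution: in your torsion-freeness step, cancellativity alone does not give $na=nb\Rightarrow a=b$ (cancellation only removes a common summand, as in $a+c=b+c\Rightarrow a=b$). The implication $na=nb\Rightarrow a=b$ for $n\in\N$ is precisely the definition of a torsion-free (cancellative) monoid, and this hypothesis is available to you because torsion-freeness is part of the paper's definition of an affine monoid. With that justification corrected, the proof is complete; the remaining remark that $n\ge 1$ unless $S$ is trivial is harmless and not needed for the statement.
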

\begin{proof}
Since $S$ is a finitely generated submonoid of $\Z^m$ for some $m\in\N$, $\mathsf q(S)$ is a $\Z$-submodule of $\Z^m$ and therefore is itself a free $\Z$-module of rank $n\leq m$.
\end{proof}

The following remark is well known, but for the convenience of the reader and since the argument is short, we give it.

\begin{remark}\label{2.12}
Let $S$ be a finitely generated monoid, then its root closure $\widetilde{S}$ is Krull.
\end{remark}
\begin{proof}
Note that if $S$ is finitely generated, then also its reduced monoid $S_{red}$ is, so by \cite[Proposition 2.7.11]{Ge-HK06a} $\widetilde{S}=\widehat{S}$. By \cite[Theorem 2.7.13]{Ge-HK06a} we obtain that $S$ is $v$-noetherian and $(S:\widehat{S})\neq\emptyset$. Now the result follows from \cite[Theorem 2.3.5.3]{Ge-HK06a}.
\end{proof}

\begin{lemma}\label{2.13}
Affine monoids are $K$-UMT for all fields $K$.
\end{lemma}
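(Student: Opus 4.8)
The plan is to deduce this from Lemma~\ref{2.8} by reducing an affine monoid to a non-negative monoid of a totally ordered abelian group. By Lemma~\ref{2.11} the quotient group $G := \mathsf q(S)$ is isomorphic to $\Z^n$ for some $n\in\N$, so it certainly satisfies the ACC on cyclic subgroups, and in particular $G$ admits a total order $\leq$ compatible with its group operation (e.g.\ a lexicographic order on $\Z^n$). Set $S^+ := \{g\in G \mid g\geq 0\}$ to be the non-negative monoid of $(G,\leq)$. Since $S$ is a submonoid of $G$, it is not a priori contained in $S^+$; but we may reselect the order so that it \emph{is}. The key point is that $S$ is finitely generated, say by $g_1,\dots,g_r$, so if the total order is chosen so that each $g_i \geq 0$ (which is possible: the $g_i$ span a subgroup of finite index in $\Z^n$, and one can pick a lexicographic order whose first coordinate functional is strictly positive on the cone they generate, or more simply enlarge to a basis and order lexicographically), then $S \subseteq S^+$.

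First I would record that $S \subseteq S^+$ and that $\mathsf q(S^+) = G = \mathsf q(S)$, so that $K[S] \subseteq K[S^+] \subseteq K[G]$ with all three sharing the same quotient field $K(G)$. Next, by Lemma~\ref{2.8}, $S^+$ is $K$-UMT, which by Remark~\ref{2.3} means $K[P']\in\mathfrak X(K[S^+])$ for every $P'\in\mathfrak X(S^+)$. I now want to transfer this to $S$. Take $P\in\mathfrak X(S)$; by Lemma~\ref{2.1} we know $K[P]\in\spec(K[S])$, and I must show it has height one. Suppose not: there is $A\in\mathfrak X(K[S])$ with $A\subsetneq K[P]$. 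Since $K[G]\cong K[S]_N$ with $N=\{X^\alpha\mid\alpha\in S\}$ and every minimal prime over a monomial is of the form $K[Q]$ with $Q\in\mathfrak X(S)$ (this is exactly the content of the Claim in Lemma~\ref{2.1} together with the minimality argument there), $A$ contains no monomial, hence corresponds to a height-one prime $\overline Q$ of $K[G]$; as $K[G]$ is factorial (Gilmer--Parker; $G\cong\Z^n$), $\overline Q = fK[G]$ for a prime $f\in K[G]$, and $A = fK[G]\cap K[S]$.

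The heart of the argument is then a monomial-rescaling trick exactly as in the proof of Lemma~\ref{2.8}: I would produce an element of $K[S]$ divisible by $f$ in $K[G]$ but lying outside $K[P]$, contradicting $A\subseteq K[P]$. Write $f = \sum_{i} c_i X^{h_i}\in K[G]$. Because $S$ generates $G$ as a group and every element of $G$ is a difference of elements of $S$, I can choose $\beta\in S$ large enough (e.g.\ $\beta$ a suitable $\N$-combination of the generators $g_i$) so that $\beta + h_i\in S$ for every $i$ \emph{and} $\beta\notin P$ — the latter is possible since $P$ is a proper, indeed minimal nonempty, prime of $S$, so $S\setminus P$ is a nonempty submonoid and one can absorb the finitely many ``corrections'' needed for $\beta+h_i\in S$ into elements of $S\setminus P$ by first translating $f$ by any element of $S$ making all exponents land in $S$, then further translating by an element of $S\setminus P$ and rescaling. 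Concretely: pick $\gamma\in S$ with $\gamma + h_i\in S$ for all $i$, so $X^\gamma f\in K[S]$; its lowest-order term (or indeed any term) has exponent in $S$, and if all its exponents lie in $P$ we replace $\gamma$ by $\gamma+\delta$ for a suitable $\delta\in S\setminus P$ so that at least the minimal exponent escapes $P$ — in fact I would instead argue as in Lemma~\ref{2.8}, choosing the translation so the \emph{smallest} exponent of $X^\gamma f$ (with respect to our compatible order) is $0$, hence not in $P$. Then $g := X^\gamma f\in K[S]$, $f\mid_{K[G]} g$, but $g\notin K[P]$ since it has an exponent outside $P$. This contradicts $A\subseteq K[P]$, so no such $A$ exists, $K[P]\in\mathfrak X(K[S])$, and $S$ is $K$-UMT.

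I expect the main obstacle to be the bookkeeping in the last step — ensuring simultaneously that the translated polynomial $X^\gamma f$ has all exponents in $S$ \emph{and} some exponent outside the prescribed height-one prime $P$. The cleanest route is to fix from the outset a total order on $G$ compatible with the group operation and adapted to $S$ (so $S\subseteq S^+$), and then mimic Lemma~\ref{2.8} verbatim: translate $f$ by the negative of its least exponent to land its bottom term at $X^0 = 1\notin K[P]$, observing that this translate already lies in $K[S]$ because every exponent of $f$ is then $\geq 0$, i.e.\ in $S^+$ — but one must check it lies in $K[S]$, not merely $K[S^+]$, which is where finite generation of $S$ (equivalently, that $S$ is ``full enough'' after an appropriate choice of order, or a further single translation by an element of $S$) is used. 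If this membership is delicate, a safe fallback is to prove the statement first for $S = S^+$ via Lemma~\ref{2.8}, then note $K[S]\hookrightarrow K[S^+]$ is, after localizing at $N$, an isomorphism on the relevant primes, so height-one primes of $K[S]$ without monomials correspond to those of $K[S^+]$ without monomials, transporting the UMT conclusion back down to $S$.
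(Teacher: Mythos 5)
Your reduction to Lemma \ref{2.8} breaks precisely where the difficulty of the lemma lies, namely when $S$ is not root closed. The monomial-rescaling trick works for $S^+$ because $S^+$ contains \emph{every} order-nonnegative element of $G$, so after shifting the least exponent of $f$ to $0$ all exponents automatically land in $S^+$; an affine monoid need not contain those differences. Concretely, take $S=\langle 2,3\rangle\subseteq\N_0$, $P=S\setminus\{0\}$ (the unique element of $\mathfrak X(S)$) and $f=1+X$: any $\gamma$ with $\gamma,\gamma+1\in S$ forces $\gamma\geq 2$, so \emph{every} monomial multiple $X^{\gamma}f$ lying in $K[S]$ has all exponents in $P$; the witnesses to $fK[G]\cap K[S]\not\subseteq K[P]$ are necessarily non-monomial multiples such as $(1+X)(1-X+X^2)=1+X^3$. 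Your proposed repair of translating further by some $\delta\in S\setminus P$ cannot help, because $P$ is an ideal: if $\gamma+h_i\in P$ then $\gamma+\delta+h_i\in P+S\subseteq P$. (A secondary point: when $S^{\times}\neq 0$, no compatible total order on $G$ satisfies $S\subseteq S^+$ at all, since $g\geq 0$ and $-g\geq 0$ force $g=0$.)

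The fallback you sketch is not sound either: $K[S]\subseteq K[S^+]$ is in general not an integral extension (for a lexicographic order on $\Z^n$, $n\geq 2$, the monoid $S^+$ is not even finitely generated and is far larger than the root closure of $S$), there is no correspondence between $\mathfrak X(S)$ and $\mathfrak X(S^+)$, and the problematic containment $A\subsetneq K[P]$ lives in $K[S]$; the monomial-free height-one primes were never the issue, the height of $K[P]$ itself is. The paper's proof supplies exactly the missing ingredient: pass to the root closure $\widetilde S$, which is Krull by Remark \ref{2.12}, so $K[\widetilde S]=\overline{K[S]}$ is a Krull domain, hence weakly Krull, hence $\widetilde S$ is $K$-UMT by Proposition \ref{2.6}; since $K[S]\subseteq K[\widetilde S]$ is \emph{integral}, going-up together with incomparability lifts a chain $(0)\subsetneq Q\subsetneq K[P]$ in $K[S]$ to $(0)\subsetneq\widetilde Q\subsetneq K[\widetilde P]$ in $K[\widetilde S]$, contradicting that $K[\widetilde P]$ has height one. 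Integrality of the overmonoid extension is what your choice of $S^+$ lacks, and without it the UMT property cannot be transported back down to $S$.
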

\begin{proof}
Let $S$ be an affine monoid with quotient group $G$ and let $K$ be a field. In Lemma \ref{2.11} we just proved that $G$ satisfies the ACC on cyclic subgroups, thus in particular $S^{\times}$ does. Since $S$ is finitely generated, $\widetilde{S}$ is Krull by the above remark. It follows by \cite[Theorem 15.6]{Gil84} that $K[\widetilde{S}]$ is Krull. Since $K[\widetilde{S}]=\overline{K[S]}$ by \cite[Corollary 2.12.1]{Gil84} we obtain that $K[S]\subseteq K[\widetilde{S}]$ is an integral extension. Now we assume to the contrary that $S$ was not $K$-UMT, i.e. there is $P\in\mathfrak X(S)$ such that $K[P]\notin\mathfrak X(K[S])$. Since $K[P]$ cannot properly contain a prime ideal with monomials, it has to contain a prime ideal without monomials, hence a $Q$ coming from some $\overline{Q}\in\mathfrak X(K[G])$, so we have the situation $(0)\subsetneq Q\subsetneq K[P]$. Let $\widetilde{Q}=\overline{Q}\cap K[\widetilde{S}]$, then clearly $\widetilde{Q}\cap K[S]=Q$. Let $\widetilde{P}$ be the prime ideal of $\widetilde{S}$ corresponding to $P$ (\cite[Proposition 2.7]{Re13a}), then we prove $\widetilde{Q}\subseteq K[\widetilde{P}]$ obtaining a contradiction to the fact that $\widetilde{S}$ is $K$-UMT (because $K[\widetilde{S}]$ is weakly Krull).\\
Since $K[S]\subseteq K[\widetilde{S}]$ is an integral extension and $\widetilde{Q}\cap K[S]=Q$, by "going up" there exists $A\in\spec(K[\widetilde{S}])$ such that $\widetilde{Q}\subseteq A$ and $A\cap K[S]=K[P]$. Now $P\subseteq A\cap S$ implies $\widetilde{P}\subseteq A\cap \widetilde{S}$ and therefore we have $K[\widetilde{P}]\subseteq A$. Note that $\widetilde{P}\cap S=P$, thus $K[\widetilde{P}]\cap K[S]=K[P]$. Therefore $K[\widetilde{P}]\subseteq A$ and both are lying over $K[P]$. It follows by \cite[Corollary 5.9]{AtMa} that $A=K[\widetilde{P}]$.
\end{proof}

\begin{proposition}\label{2.14}
Let $S$ be an affine monoid and let $D$ be a domain. Then $D[S]$ is weakly Krull if and only if $D$ is weakly Krull UMT and $S$ is weakly Krull.
\end{proposition}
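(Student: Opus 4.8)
The plan is to read this off Theorem~\ref{2.7} using the $K$- and $G$-UMT computations already made in this section. Since $S$ is affine, Lemma~\ref{2.11} gives that $G := \mathsf q(S) \cong \Z^n$ for some $n \in \N$, and in particular $G$ satisfies the ACC on cyclic subgroups. Writing $K$ for the quotient field of $D$, Theorem~\ref{2.7} then says that $D[S]$ is weakly Krull if and only if $S$ is weakly Krull $K$-UMT and $D$ is weakly Krull $G$-UMT. By Lemma~\ref{2.13} the affine monoid $S$ is automatically $K$-UMT, so the clause ``$S$ is weakly Krull $K$-UMT'' simplifies to ``$S$ is weakly Krull''. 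Thus everything reduces to showing that, for the weakly Krull domain $D$, being $G$-UMT is equivalent to being UMT.

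For this equivalence: since $G \cong \Z^n$ and being $G$-UMT is a statement about the ring $D[G]$, it depends only on the isomorphism type of $G$, so $D$ is $G$-UMT if and only if $D$ is $\Z^n$-UMT, and by Lemma~\ref{2.4}(1) this holds if and only if $D$ is $\N_0^n$-UMT. The implication ``UMT $\Rightarrow$ $\N_0^n$-UMT'' is immediate from Proposition~\ref{2.10} (equivalence of (a) and (d)). For the converse I would copy the proof of Lemma~\ref{2.9}: if $D$ is weakly Krull and $\N_0^n$-UMT, then $\N_0^n$ is weakly Krull (being factorial) and $K$-UMT (Lemma~\ref{2.13}) with quotient group $\Z^n$ satisfying the ACC on cyclic subgroups, so Theorem~\ref{2.7} yields that $D[X_1,\dots,X_n] = D[\N_0^n]$ is weakly Krull; peeling off the variables one at a time via \cite[Proposition 4.11]{AHZ93} (which states that $R[X]$ is weakly Krull exactly when $R$ is a weakly Krull UMT-domain) shows successively that $D[X_1,\dots,X_{n-1}], \dots, D[X_1]$ are weakly Krull and finally that $D$ is a weakly Krull UMT-domain.

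It then remains to assemble the two directions. If $D$ is weakly Krull UMT and $S$ is weakly Krull, then by the above $D$ is $G$-UMT and, by Lemma~\ref{2.13}, $S$ is $K$-UMT, so $D[S]$ is weakly Krull by Theorem~\ref{2.7}. Conversely, if $D[S]$ is weakly Krull, then Theorem~\ref{2.7} gives that $S$ is weakly Krull and that $D$ is weakly Krull $G$-UMT, and the equivalence just proved upgrades the last condition to: $D$ is UMT.

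The step I expect to be the main obstacle is precisely the converse ``$\N_0^n$-UMT $\Rightarrow$ UMT'' for a \emph{single} exponent $n$: Proposition~\ref{2.10} phrases the relevant equivalences with a ``for all $n$'', so it cannot be cited verbatim, and the clean route is the one above---feed $\N_0^n$ into Theorem~\ref{2.7} to see that the $n$-variable polynomial ring over $D$ is weakly Krull, then descend through \cite[Proposition 4.11]{AHZ93}. The rest is bookkeeping with the lemmas of this section.
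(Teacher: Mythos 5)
Your proof is correct and follows essentially the same route as the paper's: Lemma~\ref{2.11} for the ACC on cyclic subgroups, Lemma~\ref{2.13} for $S$ being $K$-UMT, Lemma~\ref{2.4} together with the equivalence ``UMT $\Leftrightarrow$ $\Z^n$-UMT'' for the weakly Krull domain $D$, and then Theorem~\ref{2.7}. The only difference is that the paper simply cites Proposition~\ref{2.10} for that last equivalence, whereas you rightly observe that its items (d) and (f) are quantified over all $n$ and you supply the missing single-$n$ direction (apply Theorem~\ref{2.7} to the affine monoid $\N_0^n$ to get $D[X_1,\dots,X_n]$ weakly Krull, then descend one variable at a time via \cite[Proposition 4.11]{AHZ93}); this patch is sound and only makes the argument more careful than the paper's.
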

\begin{proof}
To begin with, note that by Lemma \ref{2.11} $G:=\mathsf q(S)\cong \Z^n$ satisfies the ACC on cyclic subgroups and that $D$ being UMT is equivalent to $D$ being $G$-UMT by Proposition \ref{2.10}. It remains to prove that $S$ is $D$-UMT, which is equivalent to $S$ being $K$-UMT by Lemma \ref{2.4}, where $K$ is the quotient field of $D$, but this is just Lemma \ref{2.13}. Now apply Theorem \ref{2.7}.
\end{proof}

Since fields are always UMT, an easy consequence of this proposition is the following

\begin{corollary}\label{2.15}
Let $K$ be a field and $S$ an affine monoid. Then $K[S]$ is weakly Krull if and only if $S$ is weakly Krull.
\end{corollary}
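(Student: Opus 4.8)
The plan is to derive this immediately from Proposition~\ref{2.14}. Since $K$ is a field, it is a weakly Krull domain (it has no height-one primes at all, so the intersection defining weakly Krull is vacuous and $\mathfrak X(K)$ is trivially of finite character), and $K$ is UMT because every prime ideal of $K[X]$ lying over $(0)$ is automatically maximal (indeed $K[X]$ is a PID, so its nonzero primes are maximal, and maximal ideals of a one-dimensional domain are $t$-ideals). Thus the hypotheses "$D$ is weakly Krull UMT'' of Proposition~\ref{2.14} are satisfied with $D=K$, and the biconditional there collapses to: $K[S]$ is weakly Krull if and only if $S$ is weakly Krull.

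Concretely, I would write: by Lemma~\ref{2.11}, $\mathsf q(S)$ satisfies the ACC on cyclic subgroups, so Proposition~\ref{2.14} applies. A field $K$ is trivially weakly Krull and UMT, hence by Proposition~\ref{2.14}, $K[S]$ is weakly Krull if and only if $S$ is weakly Krull. That is the entire argument; no further case analysis is needed.

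There is essentially no obstacle here, since all the substantive content lives in Proposition~\ref{2.14} (which in turn rests on Lemma~\ref{2.13}, the $K$-UMT property of affine monoids) and in Theorem~\ref{2.7}. The only point worth a sentence of care is justifying that a field is UMT and weakly Krull, which is routine: weakly Krull is vacuous for fields, and UMT follows because $K[X]$ is a PID so that every prime over $(0)$ is maximal, and in a one-dimensional domain maximal ideals are maximal $t$-ideals. One could alternatively remark that this was already noted in the sentence preceding the example in Section~3 (where it is stated that a field $K$ is weakly Krull $G$-UMT for every group $G$), so even that verification can be cited rather than reproved.

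\begin{proof}
By Lemma~\ref{2.11}, the quotient group $\mathsf q(S)$ satisfies the ACC on cyclic subgroups. Since $K$ is a field, it is trivially weakly Krull, and it is UMT because $K[X]$ is a principal ideal domain, so that every prime ideal of $K[X]$ lying over $(0)$ is maximal, and maximal ideals of the one-dimensional domain $K[X]$ are maximal $t$-ideals. Hence the hypotheses of Proposition~\ref{2.14} are met with $D=K$, and it follows that $K[S]$ is weakly Krull if and only if $S$ is weakly Krull.
\end{proof}
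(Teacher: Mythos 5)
Your proof is correct and follows exactly the paper's route: the paper deduces Corollary~\ref{2.15} from Proposition~\ref{2.14} by noting that fields are always (weakly Krull and) UMT, which is precisely your argument, with your PID justification of the UMT property for $K$ being a fine way to make that remark explicit.
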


We now deduce the results by  Chouinard resp. El Baghdadi and Kim on Krull resp. generalized Krull monoid algebras from our main result Theorem \ref{2.7}.

\begin{proposition}
Let $D$ be a domain and $S$ a monoid with quotient group $G$. Then $D[S]$ is a Krull domain if and only if $D$ and $S$ are Krull and $G$ satisfies the ACC on cyclic subgroups.
\end{proposition}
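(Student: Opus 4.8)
The plan is to obtain the statement from Theorem \ref{2.7} by upgrading ``weakly Krull'' to ``Krull'' on each side. I would use the following two standard facts (see \cite{Ge-HK06a}): a domain $R$ is Krull if and only if it is weakly Krull and $R_P$ is a discrete valuation ring (DVR) for every $P\in\mathfrak X(R)$; and a torsion-free monoid $T$ is Krull if and only if it is weakly Krull and $T_P$ is a discrete valuation monoid (i.e.\ $T_P/T_P^{\times}\cong\N_0$) for every $P\in\mathfrak X(T)$. Since $D[S]$ is a domain only if $S$ is torsion-free, I assume throughout that $S$ is torsion-free, as is standing in this paper.

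For ``$\Leftarrow$'', assume $D$ and $S$ are Krull and $G$ satisfies the ACC on cyclic subgroups; in particular $D$ and $S$ are weakly Krull. First I would check, via Remark \ref{2.3} and Lemma \ref{2.4}, that $D$ is $G$-UMT and $S$ is $K$-UMT. For $P\in\mathfrak X(D)$ with uniformizer $\pi$ of the DVR $D_P$, we have $PD_P[G]=\pi D_P[G]$, and $\pi D_P[G]\in\mathfrak X(D_P[G])$ because $D_P[G]/\pi D_P[G]=(D_P/\pi D_P)[G]$ is a domain while no prime lies strictly between $(0)$ and $\pi D_P[G]$ (a prime contracting to $(0)$ in $D_P$ would, after inverting $D_P\setminus\{0\}$, be trapped between $(0)$ and the unit ideal of $K[G]$; one contracting to $\pi D_P$ already contains $\pi D_P[G]$); hence $P[G]\in\mathfrak X(D[G])$. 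For $P\in\mathfrak X(S)$, $S_P$ is a discrete valuation monoid, so $S_P\cong S_P^{\times}\oplus\N_0$ and $K[S_P]\cong K[S_P^{\times}][X]$ with $K[PS_P]=XK[S_P^{\times}][X]$ of height one, giving $K[P]\in\mathfrak X(K[S])$. By Theorem \ref{2.7}, $D[S]$ is weakly Krull, and by Lemma \ref{2.1} together with these UMT properties $\mathfrak X(D[S])$ consists precisely of the $P[S]$ ($P\in\mathfrak X(D)$), the $D[P]$ ($P\in\mathfrak X(S)$), and the $\overline Q\cap D[S]$ with $\overline Q\in\mathfrak X(K[S])$ containing no monomial. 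Localizing: $D[S]_{P[S]}\cong(D_P[S])_{\pi D_P[S]}$ is one-dimensional local with principal maximal ideal $\pi$ and $\bigcap_n\pi^nD_P[S]=(0)$ (an element of the intersection has all coefficients in $\bigcap_n\pi^nD_P=(0)$), hence a DVR; $D[S]_{D[P]}\cong K[S]_{K[P]}\cong K[S_P^{\times}][X]_{(X)}$ is the localization of a domain at a principal height-one prime and again a DVR; and $D[S]_{\overline Q\cap D[S]}\cong K[G]_{\widetilde Q}$ for the induced $\widetilde Q\in\mathfrak X(K[G])$, which is a DVR since $K[G]$ is factorial \cite{GP74}. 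So every height-one localization of $D[S]$ is a DVR and $D[S]$ is Krull.

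For ``$\Rightarrow$'', assume $D[S]$ is Krull. Then $D[G]=D[S]_N$, $K[S]=(D\setminus\{0\})^{-1}D[S]$ and $K[G]$ are Krull; $K[G]$ being Krull forces $G$ to satisfy the ACC on cyclic subgroups (a classical fact, cf.\ \cite{Gil84}), so Theorem \ref{2.7} applies and yields that $D$ is weakly Krull $G$-UMT and $S$ is weakly Krull $K$-UMT. To conclude $D$ is Krull, fix $P\in\mathfrak X(D)$: $G$-UMT gives $P[S]\in\mathfrak X(D[S])$, so $D[S]_{P[S]}$ is a DVR; since $D\hookrightarrow D[S]$ is faithfully flat, so is $D_P\to D[S]_{P[S]}$, whence $D_P=D[S]_{P[S]}\cap\mathrm{Frac}(D_P)$ and (picking an element of $\mathfrak m_{D_P}$ of value $1$ and using the extended-contracted ideal identity) $\mathfrak m_{D_P}$ is principal; being one-dimensional (as $D$ is weakly Krull) with principal maximal ideal and $\bigcap_n\mathfrak m_{D_P}^n=(0)$ (inherited from the DVR), $D_P$ is a DVR. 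The analogous argument for $S$ --- through $K[S]_{K[P]}\cong K[S_P]_{K[PS_P]}$, from which one reads off $S_P/S_P^{\times}\cong\N_0$ --- shows $S_P$ is a discrete valuation monoid for every $P\in\mathfrak X(S)$. Hence $D$ and $S$ are Krull.

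I expect the genuine work to be exactly this passage between ``weakly Krull'' and ``Krull'': Lemma \ref{2.1} and Theorem \ref{2.7} hand over for free which primes of $D[S]$ are of height one, but promoting ``one-dimensional local'' to ``discrete valuation'' needs the faithful-flatness and $\pi$-adic separatedness bookkeeping for the primes meeting $D$, the explicit structure of $S_P$ for the monomial primes, and the factoriality of $K[G]$ for the mixed primes. By comparison, extracting the ACC on cyclic subgroups in the forward direction is routine.
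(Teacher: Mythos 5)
Your argument is correct in substance, but it takes a genuinely different route from the paper in both directions. For ``$\Leftarrow$'', the paper obtains the UMT properties and the discreteness of the relevant localizations in a single stroke from Gilmer's content valuations: for $P\in\mathfrak X(D)$ the map $\sum_i d_iX^{s_i}\mapsto\inf_i\mathsf v_P(d_i)$ is a discrete rank-one valuation with valuation ring $D[S]_{P[S]}$ \cite[Theorem 15.3]{Gil84}, and analogously for $P\in\mathfrak X(S)$ via \cite[Theorem 15.7]{Gil84}; it then applies Theorem \ref{2.7} and checks the same three types of height-one primes that you do, the mixed ones being handled by factoriality of $K[G]$ exactly as in your proposal. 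You instead re-derive these facts by hand (the splitting $S_P\cong S_P^{\times}\oplus\N_0$, $\pi$-adic separatedness, localization bookkeeping), which is more self-contained but longer. For ``$\Rightarrow$'', the paper bypasses Theorem \ref{2.7} altogether: $K[G]$ is Krull as a localization, so $G$ satisfies the ACC on cyclic subgroups by \cite[Lemma 1.2]{BK2016}, and $D\setminus\{0\}$ and $S$ are saturated submonoids of the multiplicative monoid of the Krull domain $D[S]$, hence Krull by \cite[Theorem 2.4.8.1]{Ge-HK06a}. Your faithful-flatness argument (extended--contracted ideals giving $D_P=D[S]_{P[S]}\cap\operatorname{Frac}(D_P)$, reading off a uniformizer, and the analogous extraction of $S_P/S_P^{\times}\cong\N_0$ from the DVR $K[S]_{K[P]}$) reaches the same conclusion with noticeably more work, but it is sound.

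Two local slips you should repair. First, in showing that no prime lies strictly between $(0)$ and $\pi D_P[G]$, the parenthetical about a prime contracting to $(0)$ being ``trapped between $(0)$ and the unit ideal of $K[G]$'' is not a contradiction (every prime sits between $(0)$ and the unit ideal); the correct argument is the one you use later anyway: if $Q\subseteq\pi D_P[G]$ is prime and $\pi\notin Q$, then every element of $Q$ is divisible by all powers of $\pi$, so $Q\subseteq\bigcap_n\pi^nD_P[G]=(0)$ by the coefficientwise computation. Second, you verify $\bigcap_n\pi^nD_P[S]=(0)$ in $D_P[S]$, whereas the DVR conclusion concerns the localization $D[S]_{P[S]}$, where this intersection is not formally inherited. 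This is harmless because a one-dimensional local domain whose maximal ideal is principal is automatically a DVR (the ideal $\bigcap_n\mathfrak m^n$ is a prime strictly below $\mathfrak m$, hence zero by one-dimensionality), and you do have one-dimensionality since $P[S]$ has height one; but as written the separatedness is checked in the wrong ring.
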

\begin{proof}
"$\Rightarrow$" Let $K$ be the quotient field of $D$. Note that $K[G]$ is a Krull domain (as a localization of a Krull domain), thus by \cite[Lemma 1.2]{BK2016} $G$ satisfies the ACC on cyclic subgroups. Since saturated submonoids of Krull monoids are Krull again \cite[Theorem 2.4.8.1]{Ge-HK06a} and $D\setminus\{0\}$ and $S$ are saturated submonoids of $D[S]$, this implication follows.\\
"$\Leftarrow$"
First we show that $D$ is $S$-UMT.
Since $D$ is a Krull domain, for every $P\in\mathfrak X(D)$ there is a discrete rank-one valuation $\mathsf v_P:K\to \Z$, where $K$ is the quotient field of $D$, with valuation ring $D_P$. For $P\in\mathfrak X(D)$, we extend $\mathsf v_P$ to a discrete rank-one valuation on the quotient field of $D[S]$ via defining it on $D[S]$. Set
\begin{align*}
\mathsf v_{P[S]}:D[S]&\to \Z\\
\sum_{i=1}^n d_iX^{s_i}&\mapsto \inf\{\mathsf v_P(d_i)\mid i\in[1,n]\}.
\end{align*}
It is well-known that this defines a discrete rank-one valuation on the quotient field of $D[S]$ with valuation ring $D[S]_{P[S]}$ \cite[Theorem 15.3]{Gil84}. Therefore $P[S]$ is a height-one prime ideal of $D[S]$ and $D$ is $S$-UMT.\\
Next we prove that $S$ is $D$-UMT. Since $S$ is a Krull monoid, for every $P\in \mathfrak X(S)$ there is a discrete rank-one valuation $\mathsf w_P:G\to \Z$ with valuation monoid $S_P$. Again, for $P\in\mathfrak X(S)$ we extend $\mathsf w_P$ to a discrete rank-one valuation on the quotient field of $D[S]$ via defining it on $D[S]$. Set
\begin{align*}
\mathsf w_{D[P]}:D[S]&\to \Z\\
\sum_{i=1}^n d_iX^{s_i}&\mapsto \inf\{\mathsf w_P(s_i)\mid i\in[1,n]\}.
\end{align*}
It is well-known that this defines a discrete rank-one valuation on the quotient field of $D[S]$ with valuation ring $D[S]_{D[P]}$ \cite[Theorem 15.7]{Gil84}. Therefore $D[P]$ is a height-one prime ideal of $D[S]$ and $S$ is $D$-UMT.\\
Now, since Krull implies weakly Krull, we obtain by Theorem \ref{2.7} that $D[S]$ is weakly Krull. To prove that it is Krull, it suffices to prove that its localizations at height-one prime ideals are discrete rank-one valuation domains. For elements $A\in \mathfrak X(D[S])$, there are three cases:\\
\textbf{Case 1:} $A=D[P]$ for some $P\in\mathfrak X(S)$. For this case we just noted that $D[S]_{D[P]}$ is a discrete rank-one valuation domain.\\
\textbf{Case 2:} $A=P[S]$ for some $P\in\mathfrak X(D)$. Also for this case we noted above that $D[S]_{P[S]}$ is a discrete rank-one valuation domain.\\
\textbf{Case 3:} $A=qK[G]\cap D[S]$ for some prime element $q\in K[G]$. This case is clear, since $D[S]_A=K[G]_{qK[G]}$ and $qK[G]$ induces the standard discrete rank-one valuation.
\end{proof}

\begin{proposition}\label{Bag}
Let $D$ be a domain and $S$ a torsion-free monoid with quotient group $G$. Then $D[S]$ is a generalized Krull domain if and only if $D$ and $S$ are generalized Krull and $G$ satisfies the ACC on cyclic subgroups.
\end{proposition}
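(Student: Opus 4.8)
The plan is to rerun the proof of Theorem~\ref{2.7}, promoting ``weakly Krull'' to ``generalized Krull'' at every step. Since a generalized Krull domain (resp.\ monoid) is exactly a weakly Krull domain (resp.\ monoid) whose localizations at its height-one primes are valuation domains (resp.\ valuation monoids), the only work beyond Theorem~\ref{2.7} is to control these localizations, and for this the explicit description of $\mathfrak X(D[S])$ from Lemma~\ref{2.1} is the main tool.

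For the implication ``$\Leftarrow$'', suppose $D$ and $S$ are generalized Krull and $G$ satisfies the ACC on cyclic subgroups. I would first re-establish $G$-UMT and $K$-UMT by the valuation construction of the preceding (Krull) Proposition: for $P\in\mathfrak X(D)$ the rank-one valuation $\mathsf v_P$ of $D_P$ extends to the Gauss valuation $\mathsf v_{P[S]}\colon\sum d_iX^{s_i}\mapsto\inf\{\mathsf v_P(d_i)\}$, and for $P\in\mathfrak X(S)$ the rank-one valuation $\mathsf w_P$ of the valuation monoid $S_P$ extends to $\mathsf w_{D[P]}\colon\sum d_iX^{s_i}\mapsto\inf\{\mathsf w_P(s_i)\}$. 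One checks (as in the Krull case, now without assuming the valuations discrete) that these are valuations with valuation rings $D[S]_{P[S]}$ and $D[S]_{D[P]}$; hence $P[S]$ and $D[P]$ are height-one primes, so $D$ is $S$-UMT (hence $G$-UMT by Lemma~\ref{2.4}), $S$ is $D$-UMT (hence $K$-UMT), and these two families of localizations are valuation domains. By Theorem~\ref{2.7}, $D[S]$ is weakly Krull, and it remains to see that $D[S]_A$ is a valuation domain for every $A\in\mathfrak X(D[S])$. By Lemma~\ref{2.1} and the remark following it, $A$ is either of the form $P[S]$ with $P\in\mathfrak X(D)$, or $D[P]$ with $P\in\mathfrak X(S)$, or $\overline Q\cap D[S]$ with $\overline Q\in\mathfrak X(K[G])$; the first two cases were just treated, and in the third $D[S]_A=K[G]_{\overline Q}$, which is a discrete rank-one valuation domain because $K[G]$ is Krull (the preceding Proposition, applied to the field $K$, using that $G$ is of type $(0,0,\dots)$). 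Hence $D[S]$ is generalized Krull.

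For ``$\Rightarrow$'', assume $D[S]$ is generalized Krull. The first task is to obtain the ACC on cyclic subgroups for $G$. As $K[G]=(D\setminus\{0\})^{-1}D[S]_N$ is a localization of $D[S]$, it is generalized Krull, in particular weakly Krull; and a weakly Krull group algebra over a field cannot have a group that violates the ACC on cyclic subgroups — a strictly ascending chain of cyclic subgroups produces, by the mechanism of the example in Section~3, a non-unit lying in infinitely many height-one primes of $K[G]$, contradicting finite character. Granting the ACC, the localizations $D[G]=D[S]_N$ and $K[S]=(D\setminus\{0\})^{-1}D[S]$ are weakly Krull, so by Propositions~\ref{2.5} and~\ref{2.6} the domain $D$ is weakly Krull and $G$-UMT and the monoid $S$ is weakly Krull and $K$-UMT; in particular $P[G]\in\mathfrak X(D[G])$ for $P\in\mathfrak X(D)$ and $K[P]\in\mathfrak X(K[S])$ for $P\in\mathfrak X(S)$. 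Finally, to upgrade $D$ and $S$ to generalized Krull, a short comparison of coefficients identifies $D_P=K\cap D[G]_{P[G]}$ for $P\in\mathfrak X(D)$, and a comparison of monomials identifies $S_P=\{\gamma\in G\mid X^\gamma\in K[S]_{K[P]}\}$ for $P\in\mathfrak X(S)$; since $D[G]$ and $K[S]$ are generalized Krull, $D[G]_{P[G]}$ and $K[S]_{K[P]}$ are valuation domains, and these identities exhibit $D_P$ as a valuation domain of $K$ and $S_P$ as a valuation monoid of $G$. Therefore $D$ and $S$ are generalized Krull.

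The main obstacle is the derivation of the ACC on cyclic subgroups in ``$\Rightarrow$'': in contrast to the Krull case, ``$D[S]$ generalized Krull'' does not immediately reduce to ``$K[G]$ Krull'', so one must argue directly that a weakly Krull (hence also a generalized Krull) group algebra over a field forces the group to be of type $(0,0,\dots)$, i.e.\ run the mechanism of the example in Section~3 for a general ascending chain of cyclic subgroups and a general base field, paying attention to the behaviour of the relevant cyclotomic factors over an arbitrary field. By comparison, the verification that the Gauss-type valuation extensions used in ``$\Leftarrow$'' have the asserted valuation rings in the not-necessarily-discrete rank-one case, and the two localization identities needed in ``$\Rightarrow$'', are routine and only need to be written out carefully.
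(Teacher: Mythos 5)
Your ``$\Leftarrow$'' direction is essentially the paper's: the paper literally reruns the valuation construction of the preceding Krull proposition with ``discrete'' deleted and the value group $\Z$ replaced by a height-one value group, and then handles the three types of height-one primes exactly as you do. In the ``$\Rightarrow$'' direction, your upgrade of $D$ and $S$ from weakly Krull to generalized Krull is a genuinely different (and perfectly workable) route: the paper argues via ``generalized Krull $=$ weakly Krull PvMD'' \cite[Corollary 4.13]{AHZ93}, descends the PvMD property to $D$ and $S$ by \cite[Proposition 6.5]{AA}, and concludes with \cite[Theorem 17.2]{HK98}, whereas your identities $D_P=K\cap D[G]_{P[G]}$ and $S_P=\{\gamma\in G\mid X^{\gamma}\in K[S]_{K[P]}\}$ (valid once Propositions \ref{2.5} and \ref{2.6} give the UMT properties, since $D[G]_{P[G]}=D[S]_{P[S]}$ and $K[S]_{K[P]}=D[S]_{D[P]}$ are valuation rings) achieve the same thing more directly.

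The genuine gap is the step you yourself flag as the main obstacle: the necessity of the ACC on cyclic subgroups. The paper does not reprove this; it quotes \cite[Lemma 1.2]{BK2016} (exactly as in the Krull proposition), using that $K[G]$ is a localization of $D[S]$ and hence generalized Krull. You instead propose to ``run the mechanism of the example in Section 3'' for an arbitrary ascending chain of cyclic subgroups over an arbitrary field, but you never carry this out, and as sketched it does not go through in positive characteristic. If $\operatorname{char}K=p$ and the chain is of the form $\langle g\rangle\subsetneq\langle g/p\rangle\subsetneq\langle g/p^{2}\rangle\subsetneq\cdots$, the relevant factors collapse: $1-X^{g/p^{n}}=(1-X^{g/p^{n+1}})^{p}$, so $1-X^{g}$ is a $p$-power tower rather than a product of pairwise coprime ``cyclotomic'' pieces, and every prime containing $1-X^{g}$ contains all $1-X^{g/p^{n}}$; concretely, in $K[\bigcup_{n}\langle g/p^{n}\rangle]\cong K[\Z[1/p]]$ with $\operatorname{char}K=p$ the extension over the Laurent ring $K[X^{\pm g}]$ is purely inseparable, primes lie over uniquely, and $1-X^{g}$ lies in exactly \emph{one} height-one prime --- so no violation of finite character is produced and the intended contradiction evaporates. (This also shows the issue is not merely one of ``cyclotomic factors over an arbitrary field'' to be checked routinely: the mechanism itself breaks, so this step needs a genuinely different input, e.g.\ the cited \cite[Lemma 1.2]{BK2016}.) Until the ACC step is supplied by such an argument or reference, your ``$\Rightarrow$'' direction is incomplete, since Propositions \ref{2.5}, \ref{2.6} and Theorem \ref{2.7}, which the rest of your proof relies on, all presuppose that $G$ satisfies the ACC on cyclic subgroups.
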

\begin{proof}
"$\Leftarrow$" This is the same proof as above, just delete the word "discrete" everywhere and substitute the value group $\Z$ by some appropriate height-one value group $\Gamma$.\\
"$\Rightarrow$" Let $D[S]$ be generalized Krull, equivalently $D[S]$ is weakly Krull PvMD \cite[Corollary 4.13]{AHZ93}. By \cite[Proposition 6.5]{AA} and Theorem \ref{2.7}, it follows that $D$ is a weakly Krull PvMD and $S$ is a weakly Krull PvMS (note that a PvMS is called a $t$-Prüfer monoid in \cite[Chapter 17]{HK98}), hence $D$ is generalized Krull and $S$ is generalized Krull by \cite[Theorem 17.2]{HK98}. For the statement on the the ACC on cyclic subgroups of $G$, note that $K[G]$ is generalized Krull and use \cite[Lemma 1.2]{BK2016}.
\end{proof}

Next we prove the $K$-UMT (resp. $G$-UMT) property for another class of monoids (resp. domains).

\begin{proposition}\label{2.16}
\begin{enumerate}
\item Let $D$ be a weakly Krull GCD-domain and $G$ be an abelian group that satisfies the ACC on cyclic subgroups. Then $D$ is $G$-UMT.
\item Let $K$ be a field and $S$ be a torsion-free weakly Krull GCD-monoid with quotient group $G$ such that $G$ satisfies the ACC on cyclic subgroups. Then $S$ is $K$-UMT.
\end{enumerate}
\end{proposition}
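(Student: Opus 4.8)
The plan is to establish (1) and (2) in parallel, via the criterion of Remark~\ref{2.3}: for (1) it suffices to show $P[G]\in\mathfrak X(D[G])$ for every $P\in\mathfrak X(D)$, and for (2) that $K[P]\in\mathfrak X(K[S])$ for every $P\in\mathfrak X(S)$. In each case the relevant ideal is already known to be a nonzero prime --- by \cite[Corollary~8.2]{Gil84} in (1), and by the Claim in the proof of Lemma~\ref{2.1} in (2) --- so the only thing to rule out is that a prime $Q$ with $(0)\subsetneq Q\subsetneq P[G]$, resp.\ a prime $A$ with $(0)\subsetneq A\subsetneq K[P]$, exists. (In (1) we assume, as we may, that $G$ is torsion-free, so that $D[G]$ is a domain; in (2) this is automatic since $G=\mathsf q(S)$.) I will use freely that $\mathfrak X(D)$ and $\mathfrak X(S)$ consist of maximal $t$-ideals, since $D$ resp.\ $S$ is weakly Krull, and that in a GCD-domain (resp.\ GCD-monoid) one has $(a_1,\dots,a_n)_t=(\gcd(a_1,\dots,a_n))$ for finitely many elements.

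First I would pin down the contraction of the hypothetical intermediate prime. In (1): $Q\cap D$ is a prime of $D$ contained in $P$; it is not $P$ (else $P[G]\subseteq Q$), and, $P$ being a minimal nonzero prime, it is not a nonzero proper sub-prime either, so $Q\cap D=(0)$. Hence $Q$ corresponds to a nonzero prime $\overline Q$ of $(D\setminus\{0\})^{-1}D[G]=K[G]$ with $\overline Q\cap D[G]=Q$. In (2): $A\cap S$ is a prime ideal of $S$ contained in $P$, and it is empty, for if $X^\alpha\in A$ with $\alpha\in S$ then $K[A\cap S]\subseteq A\subsetneq K[P]$ would yield a nonempty prime of $S$ strictly smaller than $P$, contradicting $P\in\mathfrak X(S)$. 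Hence $A$ corresponds to a nonzero prime $\overline A$ of $N^{-1}K[S]=K[G]$, where $N=\{X^\alpha\mid\alpha\in S\}$, with $\overline A\cap K[S]=A$.

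The heart of the argument is to produce an element of $\overline Q\cap D[G]$ (resp.\ $\overline A\cap K[S]$) not lying in $P[G]$ (resp.\ $K[P]$), contradicting $Q\subseteq P[G]$ (resp.\ $A\subseteq K[P]$). Starting from any nonzero $q$ in $\overline Q$ (resp.\ $\overline A$): in (1), clearing a common denominator puts $q$, up to a unit of $K[G]$, into $D[G]$, and then dividing out the content --- which exists since $D$ is a GCD-domain --- gives $f_0\in D[G]$, still a unit multiple of $q$ and hence in $\overline Q\cap D[G]=Q\subseteq P[G]$, whose coefficients $a_1,\dots,a_n$ have gcd a unit of $D$; but $a_1,\dots,a_n\in P$ then forces $D=(\gcd(a_i))=(a_1,\dots,a_n)_t\subseteq P_t=P$, which is absurd. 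In (2), I multiply $q$ by a monomial $X^\beta$ with $\beta=\sum_i\beta_i$, where the exponents of $q$ are written as $\alpha_i-\beta_i$ with $\alpha_i,\beta_i\in S$, so that all exponents of $X^\beta q$ lie in $S$, and then divide by $X^\delta$ with $\delta$ the gcd (in $S$) of those exponents; this yields $f_0\in K[S]$, still a unit multiple of $q$ and hence in $\overline A\cap K[S]=A\subseteq K[P]$, whose exponents $\gamma_1,\dots,\gamma_n\in S$ have gcd a unit of $S$; but $\gamma_1,\dots,\gamma_n\in P$ then forces $S=(\gcd(\gamma_i))=(\gamma_1,\dots,\gamma_n)_t\subseteq P_t=P$, again absurd.

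I expect the main obstacle to be the bookkeeping in (2): choosing the normalizing monomial so that the ``primitive'' representative $f_0$ genuinely stays in $\overline A\cap K[S]$, and handling how gcd's transform along the way. One could also phrase the final contradiction geometrically, using that a weakly Krull GCD-domain (resp.\ GCD-monoid) is generalized Krull, so that $D_P$ (resp.\ $S_P$) is a valuation domain (resp.\ monoid) and the gcd of the coefficients (resp.\ exponents) of $f_0$, computed there, is an associate of the one of least value --- a unit of $D_P$ (resp.\ $S_P$), hence not in $P$. In any formulation the substance is the classical fact that a primitive element cannot lie in a prime ideal pulled back from the base; case (1) is in essence the usual content argument and should present no real difficulty.
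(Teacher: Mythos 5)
Your argument is correct, but it takes a genuinely different route from the paper's. The paper deduces both parts from the generalized Krull machinery: a weakly Krull GCD-domain is a weakly Krull PvMD, hence generalized Krull by \cite[Corollary 4.13]{AHZ93}, and on the monoid side one shows directly that $S_P$ is a valuation monoid for every $P\in\mathfrak X(S)$ (via the observation that in a primary GCD-monoid no two non-units are coprime); then Proposition \ref{Bag} gives that $D[G]$ resp.\ $K[S]$ is generalized Krull, hence weakly Krull, and Propositions \ref{2.5} and \ref{2.6} extract the $G$-UMT resp.\ $K$-UMT property. You instead work directly with the primes: any prime strictly below $P[G]$ (resp.\ $K[P]$) must contract to $(0)$ in $D$ (resp.\ to $\emptyset$ in $S$), hence comes from $K[G]$, and a content/primitivity argument --- divide a nonzero element of that prime by the gcd of its coefficients, resp.\ by $X^{\delta}$ with $\delta$ the gcd in $S$ of its exponents, and use $(a_1,\dots,a_n)_v=\gcd(a_i)D$ in a GCD-domain, resp.\ $(\gamma_1,\dots,\gamma_n)_v=\gcd(\gamma_i)+S$ in a GCD-monoid, together with $P=P_t$ --- gives the contradiction $D\subseteq P$ resp.\ $S\subseteq P$. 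Two small remarks: in (2) you may start with $q\in A\subseteq K[S]$ itself, so the only normalization needed is division by $X^{\delta}$, which removes the bookkeeping you were worried about; and $P=P_t$ holds for every height-one (minimal nonempty) prime, being minimal over a principal ideal, so your proof in fact uses neither the weakly Krull hypothesis nor the ACC on cyclic subgroups in an essential way --- it shows that the GCD property alone forces the $G$-UMT/$K$-UMT conclusion (the classical fact that GCD-domains, indeed PvMDs, are UMT), which is stronger than the stated proposition. What the paper's route buys is brevity given Proposition \ref{Bag}, plus the intermediate fact of independent interest that weakly Krull GCD-monoids are generalized Krull; what yours buys is a self-contained, elementary argument under weaker hypotheses.
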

\begin{proof}
1. Since GCD-domains are PvMDs by \cite[Theorem 17.1]{HK98}, $D$ is a weakly Krull PvMD  and thus generalized Krull \cite[Corollary 4.13]{AHZ93}. Now Proposition \ref{Bag} implies that $D[G]$ is generalized Krull, hence weakly Krull, and so by Proposition \ref{2.5} $D$ is $G$-UMT.\\
2. If we can show that $S$ is generalized Krull we are done, since then $K[S]$ is generalized Krull by Proposition \ref{Bag}, hence weakly Krull, and so by Proposition \ref{2.6} $S$ is $K$-UMT. To see that $S$ is generalized Krull, we have to show that for all $P\in\mathfrak X(S)$ the $S_P$ are valuation monoids; the rest follows from the fact that $S$ is weakly Krull. The proof follows the same lines as Zafrullah's answer for the case of domains in \cite{mza}. Let $P\in\mathfrak X(S)$, then $S_P$ is a primary monoid and an easy calculation gives that it is also a GCD-monoid.\\
\textbf{Claim:} In a primary GCD-monoid $H$ no two non-units are coprime.
\begin{proof}[Proof of Claim]
Let $a,b\in H\setminus H^{\times}$ and assume to the contrary that $\gcd(a,b)=H^{\times}$. It follows by \cite[Proposition 10.2.5]{HK98} that $\gcd(a,b^m)=H^{\times}$ for all $m\in\N$. But since the monoid is primary, there is $n\in\N$ such that $a\mid b^n$, hence $a\in H^{\times}$, contradicting the choice of $a$.
\qedhere[Proof of Claim]
\end{proof}
To show that $S_P$ is a valuation monoid, let $x,y\in S_P$ with $d=\gcd(x,y)$. Then there are $a,b\in S_P$ such that $x=da$ and $y=db$ with $\gcd(a,b)=(S_P)^{\times}$. Now by our claim it follows without loss of generality that $a\in (S_P)^{\times}$, hence $a\mid b$ and therefore $x\mid y$.
\end{proof}

As an easy consequence of the previous statement, we reobtain Chang's result \cite{Chang09}.

\begin{corollary}\label{2.17}
Let $D$ be a domain and $S$ a torsion-free monoid whose quotient group $G$ satisfies the ACC on cyclic subgroups. Then $D[S]$ is a weakly factorial domain if and only if $D$ is a weakly factorial GCD-domain and $S$ is a weakly factorial GCD-monoid.
\end{corollary}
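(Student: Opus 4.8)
The plan is to deduce the statement from Proposition~\ref{2.16} and Theorem~\ref{2.7}, using the characterization recalled in the introduction --- a domain (resp.\ a monoid) is weakly factorial if and only if it is weakly Krull with trivial $t$-class group --- together with the classical theorem that $D[S]$ is a GCD-domain precisely when $D$ is a GCD-domain and $S$ is a GCD-monoid (see \cite{Gil84}).

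For ``$\Leftarrow$'', suppose $D$ is a weakly factorial GCD-domain and $S$ a weakly factorial GCD-monoid. Then $D$ is weakly Krull, so Proposition~\ref{2.16}(1) (applicable since $G$ satisfies the ACC on cyclic subgroups) gives that $D$ is $G$-UMT; symmetrically, $S$ is weakly Krull, so Proposition~\ref{2.16}(2) gives that $S$ is $K$-UMT. By Theorem~\ref{2.7}, $D[S]$ is weakly Krull. Moreover $D[S]$ is a GCD-domain, and in a GCD-domain every $t$-invertible $t$-ideal is principal, so the $t$-class group of $D[S]$ is trivial; hence $D[S]$ is weakly factorial.

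For ``$\Rightarrow$'', suppose $D[S]$ is weakly factorial, hence weakly Krull. By Theorem~\ref{2.7}, $S$ is weakly Krull $K$-UMT and $D$ is weakly Krull $G$-UMT. I would first observe that $D$ and $S$ are themselves weakly factorial. Indeed, if $d\in D$ is a nonzero nonunit, write $d=q_1\cdots q_n$ with the $q_i$ primary in $D[S]$; a divisor in $D[S]$ of a constant is a unit times a constant, so we may assume $q_i\in D$, and then $q_iD=q_iD[S]\cap D$ is a primary ideal of $D$, whence $d$ is a product of primary elements of $D$. Applying the same reasoning to the monomials $X^\alpha$ with $\alpha\in S\setminus S^{\times}$ (a divisor of a monomial being a unit times a monomial, and $X^\alpha D[S]\cap\{X^s\mid s\in S\}$ corresponding to the ideal $\alpha+S$ of $S$, which is primary since $X^\alpha D[S]$ is) shows that $S$ is weakly factorial. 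It then remains to show that $D$ is a GCD-domain and $S$ a GCD-monoid; for this I would pass to the weakly factorial localizations $D[G]=D[S]_N$, with $N=\{X^\alpha\mid\alpha\in S\}$, and $K[S]=(D\setminus\{0\})^{-1}D[S]$, and invoke the known descriptions of weakly factorial group algebras and of weakly factorial monoid algebras over a field --- equivalently, Chang's original argument \cite{Chang09} --- in which triviality of the $t$-class group of the localization forces the GCD property of $D$ respectively of $S$.

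The main obstacle is precisely this last point: weak factoriality alone does not make a domain a GCD-domain (a one-dimensional local non-normal domain is weakly factorial but not even integrally closed), so the GCD conclusion must genuinely be extracted from the monoid-algebra structure, through the $t$-class groups of $D[G]$ and $K[S]$ and the Gilmer--Parker transfer of the GCD property. The remaining steps are routine applications of Proposition~\ref{2.16}, Theorem~\ref{2.7}, and contraction of primary ideals.
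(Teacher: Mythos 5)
Your proposal is correct and follows the paper's proof in outline. For the sufficiency you use exactly the paper's chain Proposition~\ref{2.16} $\to$ Theorem~\ref{2.7} $\to$ Gilmer's transfer of the GCD property \cite[Theorem 14.5]{Gil84}; the only difference is that the paper deduces triviality of the $t$-class group via the isomorphism $Cl(D[S])\cong Cl(D)\oplus Cl(S)$ from \cite{BIK}, whereas you observe directly that a GCD-domain has trivial $t$-class group, which is equally valid and a bit more economical. For the necessity the paper gets weak Krullness of $D$ and $S$ from Theorem~\ref{2.7}, triviality of their $t$-class groups from the embedding into the $t$-class group of $D[S]$ (again \cite{BIK}), and the GCD property from the splitting-set argument in the proofs of \cite[Lemmata 7 and 8]{Chang09}; you instead prove weak factoriality of $D$ and $S$ by the elementary contraction argument (divisors of constants resp. monomials are associates of constants resp. monomials, and contractions of primary ideals are primary), which is a correct, more self-contained substitute for the class-group embedding and does not even need Theorem~\ref{2.7} for that step. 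For the remaining GCD assertion you defer, exactly as the paper does, to Chang's original splitting-set lemmata, and your closing remark correctly identifies this as the one step whose content is not reproduced in either treatment (your gloss that it runs ``through the $t$-class group of the localization'' is looser than the actual splitting-set mechanism, but since you explicitly invoke Chang's argument this is not a gap).
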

\begin{proof}
"$\Leftarrow$" By Propositon \ref{2.16} $D$ is $G$-UMT and $S$ is $K$-UMT, hence by Theorem \ref{2.7} $D[S]$ is weakly Krull and \cite[Theorem 14.5]{Gil84} implies that $D[S]$ is a GCD-domain and therefore is integrally closed. It follows that $Cl(D[S])\cong Cl(D)\oplus Cl(S)\cong 0$ \cite[Corollary 2.8]{BIK}, thus $D[S]$ is a weakly factorial GCD-domain.\\
"$\Rightarrow$" Let $D[S]$ be weakly factorial, equivalently $D[S]$ is weakly Krull with trivial $t$-class group. It follows by Theorem \ref{2.7} that $D$ and $S$ are weakly Krull and that they are GCD follows by a fact on splitting sets, see the proofs of \cite[Lemmata 7 and 8]{Chang09}. It remains to prove that their $t$-class groups are trivial. But it is well-known that their $t$-class groups embed into the $t$-class group of $D[S]$ (see for example \cite{BIK}).
\end{proof}

\providecommand{\bysame}{\leavevmode\hbox to3em{\hrulefill}\thinspace}
\providecommand{\MR}{\relax\ifhmode\unskip\space\fi MR }
\providecommand{\MRhref}[2]{%
  \href{http://www.ams.org/mathscinet-getitem?mr=#1}{#2}
}
\providecommand{\href}[2]{#2}

\end{document}